\NeedsTeXFormat{LaTeX2e}
\documentclass[reqno,a4paper,11pt]{amsart}

\usepackage[utf8]{inputenc}
\usepackage{amsmath}
\usepackage{amssymb}
\usepackage{amsthm}
\usepackage{mathtools}

\usepackage{times}
\usepackage{amsaddr}
\usepackage{enumitem}
\usepackage{cite}
\usepackage{tikz}
\usepackage{cleveref}
\usepackage[english]{babel}
\usepackage[T1]{fontenc}
\usepackage{fourier}
\usepackage{fullpage}
\usepackage{caption}

\newcommand{\whp}{whp}
\newcommand{\prob}[1]{\mathbb{P}\left[#1\right]} 
\newcommand{\condprob}[2]{\mathbb{P}\left[#1 \;\middle|\; #2\right]}


\def\Erdos{Erd\H{o}s}
\def\Renyi{R\'enyi}

\def\ER{\Erdos-\Renyi}

\newtheorem{thm}{Theorem}[section]
\newtheorem{prop}[thm]{Proposition}
\newtheorem{coro}[thm]{Corollary}
\newtheorem{lem}[thm]{Lemma}

\theoremstyle{remark}
\newtheorem{remark}[thm]{Remark}
\theoremstyle{definition}
\newtheorem{definition}[thm]{Definition}

\newcommand{\proofofW}[1]{\subsection*{Proof of \Cref{#1}}}

\crefname{thm}{theorem}{theorems}
\crefname{prop}{proposition}{propositions}
\crefname{coro}{corollary}{corollaries}
\crefname{lem}{lemma}{lemmas}
\crefname{definition}{definition}{definitions}
\crefname{question}{question}{questions}

\overfullrule=3mm

\newtheoremstyle{claim}
{}
{}
{\itshape}
{}
{\bf}
{.}
{.5em}
{}
\theoremstyle{claim}

\crefname{claim}{claim}{claims}

\setlength{\footskip}{25pt}

\def\N{\mathbb{N}} 

\newcommand{\smallo}[1]{o\left(#1\right)}
\newcommand{\bigo}[1]{O\left(#1\right)}
\newcommand{\smallomega}[1]{\omega\left(#1\right)}
\newcommand{\Th}[1]{\Theta\left(#1\right)}

\newcommand{\maxdegree}[1]{\Delta\left(#1\right)} 
\newcommand{\largestcomponent}[1]{L\left(#1\right)} 
\def\Largestcomponent{L} 

\newcommand{\vertexSet}[1]{V\left(#1\right)} 
\newcommand{\edgeSet}[1]{E\left(#1\right)} 
\newcommand{\numberVertices}[1]{v\left({#1}\right)} 
\newcommand{\numberEdges}[1]{e\left({#1}\right)} 

\newcommand{\setbuilder}[2]{\left\{#1 \mid #2\right\}} 

\def\cl{\mathcal{P}}
\newcommand{\pro}[2]{P_{#1,#2}} 
\newcommand{\acc}[2]{P_{#1,m=#2}} 
\newcommand{\er}[2]{G_{#1,#2}} 

\newcommand{\sol}[1]{\beta\left(#1\right)}
\newcommand{\func}[1]{f\left(#1\right)}
\newcommand{\invFunc}[1]{f^{-1}\left(#1\right)}

\DeclareMathOperator{\excess}{ex}
\newcommand{\ex}[1]{\excess\left(#1\right)}
\newcommand{\weight}[1]{w\left(#1\right)}
\newcommand{\core}[1]{C\left(#1\right)} 
\newcommand{\forest}[1]{F\left(#1\right)} 
\DeclareMathOperator{\binDistribution}{Bin}
\newcommand{\bin}[2]{\binDistribution\left(#1,#2\right)} 
\DeclareMathOperator{\numberTreeComponents}{\#t}
\newcommand{\nt}[1]{\numberTreeComponents\left(#1\right)} 
\DeclareMathOperator{\considered}{q}
\newcommand{\con}[1]{\considered\left(#1\right)}
\DeclareMathOperator{\forbidden}{forb}
\newcommand{\forb}[1]{\forbidden\left(#1\right)}
\DeclareMathOperator{\addable}{add}
\newcommand{\add}[1]{\addable\left(#1\right)}
\newcommand{\rej}[1]{r\left(#1\right)}

\title{The early evolution of the random graph process in planar graphs and related classes}
\author{Mihyun Kang, Michael Missethan}
\address{Institute of Discrete Mathematics, Graz University of Technology, Steyrergasse 30, 8010 Graz, Austria}
\email{\{kang,missethan\}@math.tugraz.at}
\thanks{Supported by Austrian Science Fund (FWF): I3747 and W1230}
\keywords{Random graph process, random planar graph process, random graphs, random planar graphs}

\begin{document}
	
\begin{abstract}
We study the random planar graph process introduced by Gerke, Schlatter, Steger, and Taraz [The random planar graph process, Random Structures Algorithms 32 (2008), no. 2, 236--261; MR2387559]: Begin with an empty graph on $n$ vertices, consider the edges of the complete graph $K_n$ one by one in a random ordering, and at each step add an edge to a current graph only if the graph remains planar. They studied the number of edges added up to step $t$ for \lq large\rq\ $t=\smallomega{n}$. In this paper we extend their results by determining the asymptotic number of edges added up to step $t$ in the early evolution of the process when $t=\bigo{n}$. We also show that this result holds for a much more general class of graphs, including outerplanar graphs, planar graphs, and graphs on surfaces.
\end{abstract}
	
\maketitle
	
\section{Introduction and results}\label{sec:intro}
\subsection{Motivation}\label{subsec:motivation}
\Erdos\ and \Renyi\ \cite{ErdoesRenyi1959,ErdoesRenyi1960} introduced the classical random graph process $\left(\er{n}{t}\right)_{t=0}^{N}$, where one starts with an empty graph on vertex set $[n]:=\left\{1, \ldots, n\right\}$ and adds the $N:=\binom{n}{2}$ many edges of the complete graph $K_n$ one after another in a random order. Since then, many exciting results on $\er{n}{t}$ have been obtained (see e.g., \cite{Bollobas2001,JansonLuczakRucinski2000,FriezeKaronski2016} for an overview), and $\er{n}{t}$ is also known as the \ER\ random graph, because it has the same distribution as the uniform random graph on $[n]$ with exactly $t$ edges.

A variant of the \ER\ random graph process is the $\cl$-constrained random graph process, where an edge is added only when a certain graph property $\cl$ is preserved. More formally, given $n\in\N$ and a graph property $\cl$, i.e., a class of graphs with specific properties, we choose a random ordering $e_1, \ldots, e_{N}$ of the edges of the complete graph $K_n$. Then we let $\pro{n}{0}$ be the empty graph on vertex set $[n]$. For $t\in\left[N\right]$, we set $\pro{n}{t}=\pro{n}{t-1}+e_t$ and say that $e_t$ is \textit{accepted} if $\pro{n}{t-1}+e_t\in \cl$; otherwise, we set $\pro{n}{t}=\pro{n}{t-1}$ and say that $e_t$ is \textit{rejected}. Furthermore, we say that the edge $e_t$ is \textit{queried} at step $t$. We denote by $\numberEdges{\pro{n}{t}}$ the number of edges accepted until step $t$ and by $\rej{t}:=t-\numberEdges{\pro{n}{t}}$ the number of rejected edges.

Prominent examples of a graph property $\cl$ for which the $\cl$-constrained random graph process has been extensively studied include triangle-freeness \cite{ErdoesSuenWinkler1993,Bohman2009,FizPontiverosGriffithsMorris2020}, and more generally $H$-freeness for a fixed graph $H$ \cite{OsthusTaraz2001,BohmanKeevash2010,BollobasRiordan2000}, or having bounded maximum degree \cite{RucinskiWormald1992,RucinskiWormald1997}; these are all \lq local\rq\ properties. More \lq global\rq\ properties have also been considered, such as planarity \cite{GerkeSchlatterStegerTaraz2008}, $k$-colourability \cite{KrivelevichSudakovVilenchik2009}, $k$-matching-freeness \cite{KrivelevichKwanLohSudakov2018}, and the {K}{\H{o}}nig property \cite{KamcevKrivelevichMorrisonSudakov2020}. Most of the obtained results are on properties of the final graph $\pro{n}{N}$ and much less is known about the \lq evolution\rq\ of these processes.

Gerke, Schlatter, Steger, and Taraz \cite{GerkeSchlatterStegerTaraz2008} considered the $\cl$-constrained random graph process for the property $\cl$ of being planar. Among other interesting results, they showed the following.

\begin{thm}[{\cite[Theorem 1.1]{GerkeSchlatterStegerTaraz2008}}]\label{thm:Gerke}
Let $\cl$ be the class of planar graphs and $\left(\pro{n}{t}\right)_{t=0}^{N}$ the $\cl$-constrained random graph process. For every $\varepsilon>0$ there exists $\delta>0$ such that
\begin{align*}
\prob{\numberEdges{\pro{n}{\delta n^2}}\geq \left(1+\varepsilon\right)n}<e^{-n}.
\end{align*}
\end{thm}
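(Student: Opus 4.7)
The plan is to apply a union bound over candidate realizations of the first $m := \lceil (1+\varepsilon) n \rceil$ accepted edges of $\pro{n}{t}$, where $t := \delta n^2$. If $\numberEdges{\pro{n}{t}} \geq m$, then the random set $Q_t = \{e_1, \ldots, e_t\}$ of queried edges must contain a labeled planar graph $H$ on $[n]$ with $\numberEdges{H} = m$ (take $H$ to be the subgraph of the first $m$ accepted edges). Writing $N := \binom{n}{2}$, a union bound yields
\[
\probLarge{\numberEdges{\pro{n}{t}} \geq m} \leq p_{n, m} \cdot \frac{\binom{N - m}{t - m}}{\binom{N}{t}} \leq p_{n, m} \cdot (3 \delta)^m,
\]
where $p_{n, m}$ is the number of labeled planar graphs on $[n]$ with $m$ edges, and the hypergeometric factor is bounded for $n$ sufficiently large by a direct calculation.

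The crux is a sharp bound on $p_{n, m}$. I would exploit the $5$-degeneracy of planar graphs: every planar graph admits an elimination ordering in which each vertex has at most $5$ later neighbors. Encoding such a graph by the ordering together with, for each vertex, its set of outgoing neighbors (of size at most $5$), and carefully bounding overcounting using a canonical choice of ordering (e.g.\ BFS from a canonical root, or a lex-min $5$-degenerate ordering), I would aim for a bound of the form $p_{n, m} \leq K(\varepsilon)^n$ for some constant $K(\varepsilon)$ depending only on $\varepsilon$. Given this, choosing $\delta = \delta(\varepsilon)$ small enough makes the product $K(\varepsilon)^n \cdot (3\delta)^{(1+\varepsilon)n}$ less than $e^{-n}$.

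The main obstacle is obtaining the bound $p_{n, m} \leq K^n$ without a factorial blowup: the naive $5$-degenerate encoding yields an overcounting factor of $n!$, which cannot be absorbed by $(3\delta)^{(1+\varepsilon)n}$ for fixed $\delta$ as $n \to \infty$. To overcome this, I expect one must refine the union bound by exploiting the \emph{greedy} structure of the process: every edge in $Q_t \setminus \pro{n}{t}$ was rejected at its time of query, meaning it formed a Kuratowski obstruction (a $K_5$ or $K_{3,3}$ minor) together with the then-current accepted subgraph. Counting planar graphs $H$ together with a compatible collection of \lq rejection witnesses\rq---i.e.\ near-Kuratowski subgraphs of $H$ matched with the rejected edges of $Q_t$---should shrink the effective witness count enough to compensate for the $n!$ and thus yield the required $e^{-n}$ bound.
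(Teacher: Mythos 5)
This is a cited result (\cite[Theorem 1.1]{GerkeSchlatterStegerTaraz2008}); the present paper does not prove it. The closest argument in the paper is the union bound establishing~\eqref{eq:5} in Section~\ref{subsec:proofLower}: there one sums $\probLarge{H\subseteq\{e_1,\ldots,e_{\bar t}\}}\leq(\bar t/N)^m$ over all $H\in\cl(n,m)$ and invokes $|\cl(n)|\leq n!\,c^n$ from Theorem~\ref{thm:small_classes}. Crucially, that argument is run at $\bar t=n\log n$, where $(\bar t/N)^m\approx(2\log n/n)^{(1+\delta)n}$ decays fast enough to absorb the factor $n!$. Your proposal attempts the same union bound at $t=\delta n^2$, where the hypergeometric factor is only a fixed geometric $(2\delta+\smallo{1})^m$ per graph. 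You then correctly observe that this cannot absorb the count of planar graphs on $[n]$: already labelled spanning trees give $p_{n,m}\geq n^{n-2}$, so a bound of the form $p_{n,m}\leq K(\varepsilon)^n$ is false, and $n^{n-2}(3\delta)^{(1+\varepsilon)n}\to\infty$ for every fixed $\delta$. Thus the core of the proof is not present.

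The proposed repair via \lq rejection witnesses\rq\ does not close the gap. It is a heuristic that a more restricted union bound might work, but no mechanism is given by which pairing each of the $t-m\approx\delta n^2$ rejected edges with a near-Kuratowski substructure of $H$ reduces the number of admissible pairs $(H,Q_t)$ by the needed superexponential factor; if anything, enumerating such a matching increases the number of configurations to sum over. What is missing is the additional structural idea that lets one avoid summing over all of $\cl(n,m)$ at quadratic time $t$ --- for instance, conditioning on information about $\er{n}{t}$ (whose component structure agrees with that of $\pro{n}{t}$ by Remark~\ref{rem:component_structure}), or a multi-round exposure reducing to the polylogarithmic regime where the naive bound succeeds. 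Neither the paper's text nor your sketch supplies that ingredient, so the attempt as written does not prove the statement.
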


\Cref{thm:Gerke} immediately implies the following result on the asymptotic number of edges accepted until a superlinear step. Throughout the paper, we will use standard Landau notation for asymptotics and all the asymptotics are taken as $n\to \infty$. We say that an event holds {\em with high probability} (\whp\ for short) if it holds with probability tending to one as $n\to\infty$.
	
\begin{coro}\label{coro:numberEdges}
Let $\cl$ be the class of planar graphs. Let $\left(\pro{n}{t}\right)_{t=0}^{N}$ be the $\cl$-constrained random graph process and $t=t(n)\in\left[N\right]$ be such that $n\ll t\ll n^2$. Then \whp\ $\numberEdges{\pro{n}{t}}=\left(1+\smallo{1}\right)n$.
\end{coro}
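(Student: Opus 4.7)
The plan is to prove $\numberEdges{\pro{n}{t}} = (1+\smallo{1})n$ \whp\ by establishing matching upper and lower bounds of $(1 \pm \smallo{1})n$.

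The upper bound is immediate from \Cref{thm:Gerke} combined with the monotonicity of $s \mapsto \numberEdges{\pro{n}{s}}$. Since $t \ll n^2$, for every fixed $\delta > 0$ we have $t \leq \delta n^2$ for all sufficiently large $n$. Given $\varepsilon > 0$, \Cref{thm:Gerke} furnishes such a $\delta > 0$ with $\prob{\numberEdges{\pro{n}{\delta n^2}} \geq (1+\varepsilon)n} < e^{-n}$, and monotonicity transports this bound to $\numberEdges{\pro{n}{t}} \leq (1+\varepsilon)n$ \whp. A standard diagonal argument (letting $\varepsilon = \varepsilon(n) \to 0$ slowly enough) then yields $\numberEdges{\pro{n}{t}} \leq (1+\smallo{1})n$ \whp.

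For the lower bound, couple the two processes by using the same random edge ordering, so that $\pro{n}{s} \subseteq \er{n}{s}$ for every $s$. The key claim I would prove first is that the connected component partitions of $\pro{n}{s}$ and $\er{n}{s}$ are identical at every step, by induction on $s$. For the inductive step, if $e_s$ has its endpoints in different components of $\er{n}{s-1}$ then, by the induction hypothesis, also in different components of $\pro{n}{s-1}$; adjoining such a bridge between two disjoint planar subgraphs preserves planarity, so $e_s$ is accepted and both partitions merge the same pair of components. If instead the endpoints lie in the same component of $\er{n}{s-1}$, hence of $\pro{n}{s-1}$ by the induction hypothesis, neither partition changes, regardless of whether $e_s$ is accepted.

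With this equality of components in hand, the lower bound follows from standard random graph theory. For $t \gg n$ the average degree $2t/n$ of $\er{n}{t}$ tends to infinity, so \whp\ $\er{n}{t}$ has a giant component of size $(1-\smallo{1})n$; its total number of components is then at most $1$ plus the number of vertices outside the giant component, hence $\smallo{n}$ \whp. The component-preservation claim transfers this bound to $\pro{n}{t}$, and since any graph on $n$ vertices with $k$ components has at least $n-k$ edges, we conclude $\numberEdges{\pro{n}{t}} \geq n - \smallo{n} = (1-\smallo{1})n$ \whp. Combined with the upper bound, this gives the corollary. I do not anticipate any serious obstacle: the component-preservation claim is a short induction resting on the fact that bridges preserve planarity, and all remaining ingredients are either \Cref{thm:Gerke} or classical facts about supercritical $\er{n}{t}$.
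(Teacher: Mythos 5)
Your proof is correct and follows essentially the same route as the paper: the upper bound is read off from \Cref{thm:Gerke} via monotonicity, and the lower bound comes from the giant component of $\er{n}{t}$ transferring to $\pro{n}{t}$ because the component partitions coincide (your inductive claim is precisely the content of \Cref{rem:component_structure}, which the paper states without proof). The only cosmetic difference is that you phrase the final edge count via the number of components rather than via the size of the largest one, but the two are equivalent.
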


We note that the upper bound in \Cref{coro:numberEdges}, i.e., $\numberEdges{\pro{n}{t}}\leq \left(1+\smallo{1}\right)n$, follows directly from \Cref{thm:Gerke}. Furthermore, it is well known that \whp\ the largest component of $\er{n}{t}$ has $\left(1+\smallo{1}\right)n$ vertices if $t\gg n$ (see e.g., \cite{ErdoesRenyi1960}). Together with the simple fact that the number of vertices in the largest components of $\er{n}{t}$ and $\pro{n}{t}$ coincide (see \Cref{rem:component_structure}) this implies the lower bound on $\numberEdges{\pro{n}{t}}$ in \Cref{coro:numberEdges}.

Gerke, Schlatter, Steger, and Taraz asked the asymptotic behaviour of $\numberEdges{\pro{n}{t}}$ in the earlier stage when $t=\bigo{n}$. In this paper we answer this question in a more general setting: We determine $\numberEdges{\pro{n}{t}}$ in the case $t=\bigo{n}$ for a wide range of graph classes, including outerplanar graphs, planar graphs, and graphs on surfaces (see \Cref{thm:main}).

Gerke, Schlatter, Steger, and Taraz also studied a random graph $\acc{n}{m_0}$, which is a graph obtained from the $\cl$-constrained random graph process when $m_0$ many edges have actually been accepted: In other words,
\begin{align}\label{eq:9}
	\acc{n}{m_0}=\pro{n}{t_0}, \quad \text{where}\quad t_0:=\min\setbuilder{t}{\numberEdges{\pro{n}{t}}=m_0}.
\end{align}
Equivalently, $\acc{n}{m_0}$ can be obtained by the so-called \textit{random greedy process}. There we start with an empty graph on $n$ vertices and in each step we add an edge chosen uniformly at random from those which are not yet in the graph and do not violate the property $\cl$. They showed that in the \lq dense\rq\ regime when $m_0=cn/2$ for $2<c<6$, \whp\ $\acc{n}{m_0}$ is connected.

\begin{thm}[{\cite[Theorem 1.2]{GerkeSchlatterStegerTaraz2008}}]\label{thm:connected}
Let $\cl$ be the class of planar graphs and $\acc{n}{m_0}$ be as defined in \eqref{eq:9}.
If $m_0=m_0(n)$ is such that $m_0=cn/2$ for $2<c<6$, then \whp\ $\acc{n}{m_0}$ is connected.
\end{thm}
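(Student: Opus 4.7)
My plan is to combine \Cref{thm:Gerke} with the identity (noted in the excerpt via \Cref{rem:component_structure}) that, under the natural coupling of $\pro{n}{t}$ and $\er{n}{t}$ by a common random edge ordering, the largest components of the two graphs have the same number of vertices for every $t$. In particular, connectedness of $\er{n}{t_0}$ immediately implies connectedness of $\pro{n}{t_0}$, where $t_0 := \min\{t : \numberEdges{\pro{n}{t}} = m_0\}$ is the step at which $\acc{n}{m_0}$ is produced.

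The first step of the proof is to show that $t_0 \gg \tfrac{1}{2} n \log n$ \whp. Since $c > 2$, I fix $\varepsilon > 0$ with $1 + \varepsilon < c/2$ and let $\delta = \delta(\varepsilon) > 0$ be the constant supplied by \Cref{thm:Gerke}. That theorem yields
\begin{align*}
\prob{\numberEdges{\pro{n}{\delta n^2}} \leq (1+\varepsilon) n} \geq 1 - e^{-n},
\end{align*}
and since $(1 + \varepsilon) n < cn/2 = m_0$, on this event we have $t_0 > \delta n^2$.

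Conditioning on this event, $\er{n}{t_0}$ contains $\er{n}{\delta n^2}$ as a spanning subgraph. Since $\delta n^2 \gg \tfrac{1}{2} n \log n$, the classical \ER\ connectedness theorem gives that $\er{n}{\delta n^2}$—and hence $\er{n}{t_0}$—is connected \whp. The component-size identity then transfers this conclusion to $\acc{n}{m_0} = \pro{n}{t_0}$, finishing the proof.

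There is no substantial technical obstacle in this argument: the hypothesis $c > 2$ is invoked precisely so that, via \Cref{thm:Gerke}, the number of queries needed to accept $m_0$ edges is forced up to scale $\Theta(n^2)$, which is far above the \ER\ connectedness threshold; meanwhile, $c < 6$ is needed only to ensure that $m_0 = cn/2$ is actually attained by the process, given that maximal planar graphs contain at most $3n-6$ edges. If one wanted a self-contained proof not invoking \Cref{rem:component_structure}, the main point to verify is the standard coupling fact that an edge $\{u,v\}$ is rejected by the $\cl$-constrained process only when $u$ and $v$ already share a component of $\pro{n}{t-1}$, which follows because joining two disjoint planar components by a single edge preserves planarity.
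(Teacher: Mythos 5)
This statement is quoted from Gerke, Schlatter, Steger, and Taraz; the present paper does not supply a proof of it, only a citation. Your argument is nevertheless correct and self-contained given the tools quoted in the paper. From \Cref{thm:Gerke} you get, with probability at least $1-e^{-n}$, that $\numberEdges{\pro{n}{\delta n^2}}<(1+\varepsilon)n$; choosing $\varepsilon$ with $1+\varepsilon<c/2$ (possible precisely because $c>2$) makes this strictly less than $m_0$, and by monotonicity of $\numberEdges{\pro{n}{t}}$ this forces $t_0>\delta n^2$ on that event. Since $\delta n^2\gg\tfrac{1}{2}n\log n$, the \ER\ graph $\er{n}{\delta n^2}$ is \whp\ connected, and hence so is its supergraph $\er{n}{t_0}$ on the intersection of these two high-probability events. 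The transfer to $\acc{n}{m_0}=\pro{n}{t_0}$ via \Cref{rem:component_structure} is legitimate: that remark holds deterministically for every realization of the edge ordering, hence in particular at the random stopping time $t_0$. Your remarks on the roles of the two constraints are also right --- $c>2$ is what lets you separate $(1+\varepsilon)n$ from $m_0$, and $c<6$ guarantees $m_0\le 3n-6=\numberEdges{\pro{n}{N}}$ (since $\pro{n}{N}$ is an edge-maximal planar graph and therefore a triangulation), so that $\acc{n}{m_0}$ and $t_0$ are well-defined.
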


In other words, \Cref{thm:connected} says that if $m_0=cn/2$ for $2<c<6$, then \whp\ the largest component $\largestcomponent{\acc{n}{m_0}}$ of $\acc{n}{m_0}$ contains all $n$ vertices. In this paper we determine the asymptotic order of $\largestcomponent{\acc{n}{m_0}}$ also in the \lq sparse\rq\ regime when $c<2$ (see \Cref{thm:largestComponent}).

Another well-known random graph model is the {\em uniform} random planar graph. More generally, let $P(n,m_0)$ denote a graph that is chosen uniformly at random from all graphs in $\cl$ having vertex set $[n]$ and $m_0$ edges. A classical question is whether or not a random graph $\acc{n}{m_0}$ \lq behaves\rq\ like the uniform random graph $P(n,m_0)$. For example, Gim\'{e}nez and Noy \cite{GimenezNoy2009} showed that the probability that the uniform random planar graph $P(n,m_0)$ is connected is bounded away from one if $m_0=cn/2$ for $2<c<6$, i.e., a statement as in \Cref{thm:connected} is not true for $P(n,m_0)$. A consequence of our results will be that the largest components of the two random graphs $\acc{n}{m_0}$ and $P(n,m_0)$ behave \lq differently\rq\ also in the sparse regime when $m_0=cn/2$ for $1<c<2$ (see \eqref{eq:11}).

Another natural property which was considered in \cite{GerkeSchlatterStegerTaraz2008} is the number of edges one has to query until $m_0$ of them have been accepted. We denote this number by $\con{m_0}$, i.e.,
\begin{align}\label{eq:10}
	\con{m_0}:=\min\setbuilder{t}{\numberEdges{\pro{n}{t}}=m_0}.
\end{align}
Gerke, Schlatter, Steger, and Taraz \cite{GerkeSchlatterStegerTaraz2008} asked the order of $\con{3n/4}$ when $\cl$ is the class of planar graphs. In \Cref{coro:considered} we will determine $\con{3n/4}$ as a special case of our general result.

\subsection{Main results}\label{subsec:main}
In the following definition we extract the properties of planar graphs which are essential for our proof, but are satisfied by other well-known classes of graphs that can be characterised by \lq forbidden minors\rq\ (see \Cref{prop:graph_classes}). 

\begin{definition}\label{def:class}
Throughout the paper, let $\cl$ be a class of graphs fulfilling the following properties:
\begin{enumerate}
\item\label{def:classA}
it is not equal to the class of all graphs;
\item\label{def:classB}
it contains all edgeless graphs;
\item\label{def:classC}
it is closed under taking isomorphism;
\item\label{def:classD}
it is closed under taking minors;
\item\label{def:classE}
it is weakly addable, i.e., it is closed under adding an edge between two components;
\item\label{def:classF}
it is closed under adding an edge in a tree component.
\end{enumerate}
\end{definition}

Throughout the paper, we consider only vertex-labelled simple undirected graphs. It is straightforward to check the properties in \Cref{def:class} for the following general class of graphs.
\begin{prop}\label{prop:graph_classes}
For any $r\in\N$ let $H_1, \ldots, H_r$ be 2-edge-connected graphs that contain at least two cycles. Then the class $\mathcal{H}$ of all graphs that contain none of $H_1, \ldots, H_r$ as a minor fulfils the properties \ref{def:classA}--\ref{def:classF} in \Cref{def:class}.
\end{prop}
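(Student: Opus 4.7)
The plan is to verify the six properties (a)--(f) of \Cref{def:class} in turn. The first four are formal: (a) any $H_j$ is a minor of itself so $H_j\notin\mathcal H$ and $\mathcal H$ is a proper subclass; (b) every minor of an edgeless graph is edgeless whereas each $H_j$ contains cycles and hence edges; (c) and (d) follow from invariance of the minor relation under isomorphism and its transitivity. I would dispatch these in a few lines.

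For (e) and (f) I would argue by contradiction via minor models. Take $G\in\mathcal H$, form $G':=G+e$ where $e$ is the edge of the appropriate type, and assume some $H:=H_j$ has a minor model $(V_h)_{h\in V(H)}$ in $G'$. Since $G\in\mathcal H$, the edge $e$ must appear somewhere in the model, either as a branch edge or as an internal edge of some branch set $V_w$. The goal in each case is to transfer the model back to $G$, yielding a contradiction.

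For (e), where $e=uv$ joins distinct components $C_1\ni u$ and $C_2\ni v$ of $G$, the edge $e$ is a bridge of $G'$. If $e$ realises some branch edge $hh'\in E(H)$, then $V_h\subseteq C_1$ and $V_{h'}\subseteq C_2$; but by 2-edge-connectivity of $H$ the graph $H-hh'$ is still connected, and the corresponding chain of branch edges lies entirely in $G$, forcing $V_h$ and $V_{h'}$ into a common component of $G$ -- a contradiction. If instead $e$ lies inside some $V_w$, then $V_w$ decomposes as $A\sqcup B$ with $u\in A\subseteq C_1$ and $v\in B\subseteq C_2$, each part connected in $G$; if all branch edges leaving $V_w$ attach on just one of the two sides, we prune the unused side to obtain a model of $H$ in $G$, and otherwise the remaining branch sets partition cleanly between $C_1$ and $C_2$ with no edges across in $H-w$, making $w$ a cut vertex whose removal disconnects $H$ -- again a contradiction with the connectivity hypothesis.

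For (f), where $e$ is added inside a tree component $T$ of $G$, the new component $T+e$ is unicyclic and has cyclomatic number $1$. Since $H$ is connected (being 2-edge-connected), all its branch sets lie in one component of $G'$; if that component is not $T+e$, the model already lives in $G$, while if it is $T+e$, then $H$ is a minor of $T+e$, but minor operations do not increase cyclomatic number whereas $H$ has cyclomatic number at least $2$ by the \emph{at least two cycles} hypothesis -- contradicting that $T+e$ has cyclomatic number $1$. The main obstacle is the second case of (e): one has to handle the delicate situation where the bridge $e$ is swallowed entirely by one branch set $V_w$, and the case analysis on the ``sidedness'' of the branch edges leaving $V_w$ is where the structural hypothesis on the excluded minors is essentially used.
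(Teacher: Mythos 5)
Your verification of properties \ref{def:classA}--\ref{def:classD} and \ref{def:classF} is sound, and Case~1 of \ref{def:classE} (where the added bridge $e$ realises a branch edge $hh'$) is also correct: $2$-edge-connectivity guarantees that $H-hh'$ is connected, and the corresponding chain of branch sets lies entirely in $G$, contradicting that $V_h$ and $V_{h'}$ sit in different components. The paper itself does not spell this out (it only declares the verification ``straightforward''), so you are genuinely supplying a missing argument.

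However, the final step of Case~2 in \ref{def:classE} is wrong, and the slip exposes a real subtlety. You correctly derive that if $e$ lies inside a branch set $V_w$ and branch edges leave $V_w$ from both sides of the partition $V_w=A\sqcup B$, then the remaining branch sets split between $C_1$ and $C_2$ with no edges of $H-w$ across the split, so $w$ is a cut vertex of $H$. You then declare this ``a contradiction with the connectivity hypothesis.'' But the stated hypothesis is only $2$-\emph{edge}-connectivity, which forbids bridges, not cut vertices. The bowtie (two triangles sharing a vertex) is $2$-edge-connected and has exactly two cycles, yet it has a cut vertex, and it is a genuine counterexample to the proposition with the stated hypothesis: let $G$ be the disjoint union of two triangles, so the bowtie is not a minor of $G$ (a connected minor would have to live inside one $3$-vertex component), but adding an edge $e$ between the two triangles and contracting $e$ yields the bowtie as a minor of $G+e$, so property \ref{def:classE} fails for the bowtie-minor-free class. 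The proposition --- and your Case~2 --- goes through once the hypothesis is strengthened to $2$-\emph{vertex}-connectivity, which all the forbidden minors listed after the proposition (the diamond, $K_4$, $K_{2,3}$, $K_5$, $K_{3,3}$) do satisfy; with that hypothesis a cut vertex of $H$ is a genuine contradiction and your argument closes as intended.
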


Prominent examples for the class $\mathcal{H}$ in \Cref{prop:graph_classes} are the following:
\begin{itemize}
	\item
	the class of all cactus graphs ($H_1=\text{\lq diamond graph\rq}$, that is, $K_4$ minus one edge);
	\item
	the class of all outerplanar graphs ($H_1=K_4$, $H_2=K_{2,3}$);
	\item
	the class of all series-parallel graphs ($H_1=K_4$);
	\item
	the class of all planar graphs ($H_1=K_5$, $H_2=K_{3,3}$);
	\item
	the class of all graphs embeddable on an orientable surface of genus $g\in\N$ (only the existence of graphs $H_1, \ldots, H_r$ is known, see \cite{RobertsonSeymour}).
\end{itemize}

To state our main results, we need also the following definition.
\begin{definition}\label{def:func}
Given $c>1$ let $\sol{c}$ be the unique positive solution of the equation $1-x=e^{-cx}$ and define 
\begin{align*}
\func{c}:=2\sol{c}+c\left(1-\sol{c}\right)^2.
\end{align*}
Denote by $f^{-1}$ the inverse function of $f$.
\end{definition}

Note that $\sol{c}$ is equal to the survival probability of a Galton-Watson process with offspring distribution Poisson with mean $c$.
Basic properties of the function $f:\left(1,\infty\right)\to \left(1,2\right)$, including the existence of the inverse function $f^{-1}$, can be found in \Cref{lem:function}.

In the following theorem we provide the asymptotic order of the number of accepted edges $\numberEdges{\pro{n}{t}}$ when $t=\bigo{n}$ for any class of graphs $\cl$ satisfying the properties in \Cref{def:class}. As $\numberEdges{\pro{n}{t}}$ is \lq quite close\rq\ to $t$ in this early stage of the evolution, it is more convenient to state the asymptotic order of the number of {\em rejected} edges $\rej{t}=t-\numberEdges{\pro{n}{t}}$ instead of $\numberEdges{\pro{n}{t}}$.
\begin{thm}\label{thm:main}
	Let $\cl$ be a class of graphs satisfying the properties \ref{def:classA}--\ref{def:classF} in \Cref{def:class} and $\left(\pro{n}{t}\right)_{t=0}^{N}$ be the $\cl$-constrained random graph process. Let $h=h(n)=\smallomega{1}$ be a function which tends to $\infty$ arbitrarily slowly as $n\to\infty$. Let $t=t(n)\in\left[N\right]$ and $s=s(n)$. Then \whp
	\begin{align*}
	\rej{t}=
	\begin{cases}
	0 & \text{if} ~~ t=n/2-s ~~\text{for}~~ s\gg n^{2/3};
	\\
	\bigo{h}& \text{if} ~~ t=n/2+s ~~\text{for}~~ s=\bigo{n^{2/3}};
	\\
	\Th{s^3/n^2}& \text{if} ~~ t=n/2+s ~~\text{for}~~ n^{2/3}\ll s\ll n;
	\\
	\left(c-\func{c}+\smallo{1}\right)n/2& \text{if} ~~ t=cn/2 ~~\text{for}~~ c>1.
	\end{cases}
	\end{align*}
\end{thm}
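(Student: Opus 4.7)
My plan is to leverage the fact that the vertex partition of $\pro{n}{t}$ into components coincides with that of $\er{n}{t}$ at every step. This follows by a straightforward induction on $t$, using property~\ref{def:classE} (weak addability) to show that any queried edge joining two components is accepted, while an edge inside a component leaves the partition unchanged. Moreover, since each forbidden minor $H_i$ from \Cref{prop:graph_classes} is $2$-edge-connected with at least two cycles, its cyclomatic number is at least~$2$, and by the monotonicity of cyclomatic number under taking minors no edge added inside a tree or a unicyclic component can ever be rejected. Hence every rejection occurs inside a component of excess at least~$2$, and at most $\ex{C}$ rejections occur inside each such component $C$, giving the deterministic bound
\begin{equation}\label{eq:rejUpper}
\rej{t}\;\leq\;\ex{\er{n}{t}}\;:=\;\sum_{C}\bigl(\numberEdges{C}-\numberVertices{C}+1\bigr)_{+}.
\end{equation}

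Given \eqref{eq:rejUpper}, the first two cases are immediate: \whp\ $\er{n}{t}$ is a forest when $t=n/2-s$ with $s\gg n^{2/3}$, so $\rej{t}=0$; and in the critical window the excess is stochastically bounded, so $\rej{t}\leq h$ \whp\ for any $h=\smallomega{1}$. The upper bounds in the third and fourth cases follow from the known excess asymptotics $\ex{\er{n}{t}}=\Th{s^3/n^2}$ in the barely supercritical regime and $\ex{\er{n}{t}}=(c-\func{c})n/2+\smallo{n}$ for $c>1$, the latter being a direct computation from $1-\sol{c}=e^{-c\sol{c}}$ together with the standard component-count formulae outside the giant.

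The technical heart of the proof is the matching lower bound in the third and fourth cases. My plan is to show that, conditional on $e_1,\ldots,e_{t-1}$, the probability that $e_t$ is rejected is asymptotically $\sol{c}^2$ in the fourth case and $(1-\smallo{1})(2s/n)^2$ in the third, i.e.\ asymptotically the probability that both endpoints of $e_t$ lie in the giant (resp.\ in the emerging giant) of $\er{n}{t-1}$. A short computation using $1-\sol{c}=e^{-c\sol{c}}$ yields the identity $\frac{d}{dc}\bigl(c-\func{c}\bigr)=\sol{c}^2$, so integrating the rate over $t$ reproduces $(c-\func{c})n/2$; an analogous one-variable integration handles the barely supercritical case. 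Concentration around the mean then follows by a standard Azuma--Hoeffding argument applied to the bounded-difference Doob martingale of $\rej{t}$ with respect to the filtration generated by $e_1,\ldots,e_t$.

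The principal obstacle is the supporting \emph{saturation} statement: conditional on both endpoints of $e_t$ lying in the giant of $\er{n}{t-1}$, the edge $e_t$ completes a forbidden $H_i$-minor in $\pro{n}{t-1}$ with probability $1-\smallo{1}$. My strategy is to track the $2$-core of the $\cl$-accepted subgraph inside the giant by a differential-equation/martingale analysis of its vertex and edge profile, and to show that it develops a kernel of positive edge density inside the giant's own kernel. Once this is in place, the expansion and abundance of edge-disjoint paths inside a random multigraph of size $\Th{n}$ (respectively $\Th{s^3/n^2}$ in the barely supercritical case, where the long tree-branches of typical length $\Th{n/s}$ attached to the kernel amplify even a single copy of $H_i$ to block a positive fraction of in-giant pairs) forces almost every pair in the giant to be ``one edge away'' from completing some $H_i$-minor. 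The $2$-edge-connectedness hypothesis on each $H_i$ is indispensable here: it guarantees that $H_i$ minus any single edge remains connected, which is precisely what is required for the resulting ``almost-$H_i$'' to be recognisable as a minor in the accepted subgraph.
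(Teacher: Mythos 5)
Your upper bound is fine in substance: the inequality $\rej{t}\leq\ex{\er{n}{t}}$ is exactly the paper's \Cref{lem:upperBound_excess}, and combined with the known excess asymptotics for $\er{n}{t}$ it gives all four upper bounds. However, the justification you give is off in a way worth flagging. You claim that no edge added inside a unicyclic component can ever be rejected; this is false for classes satisfying \Cref{def:class}: in the class of cactus graphs, adding a chord to a cycle is rejected, and cactus graphs satisfy \ref{def:classA}--\ref{def:classF}. Property \ref{def:classF} only protects tree components. Fortunately your inequality does not actually need the unicyclic claim: the correct and sufficient observation (the paper's) is that every rejection happens inside a non-tree component of $\pro{n}{i-1}\subseteq\er{n}{i-1}$ and hence increments $\ex{\er{n}{\cdot}}$, which is non-decreasing. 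Also note that \Cref{thm:main} is stated for any $\cl$ satisfying \Cref{def:class}, not just the finitely-forbidden-minor classes of \Cref{prop:graph_classes}; appealing to ``the $H_i$'' requires an (unstated) invocation of the graph minor theorem.

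The lower bound is where the proposal has a genuine gap, and it is the entire difficulty of the theorem. Your plan hinges on a ``saturation'' lemma---that, conditional on both endpoints of $e_t$ lying in the giant of $\er{n}{t-1}$, the edge is rejected with probability $1-\smallo{1}$---and then integrates the resulting rejection rate using $\frac{d}{dc}\bigl(c-\func{c}\bigr)=\sol{c}^2$. The derivative identity is correct (it is \Cref{lem:function}\ref{lem:functionA}), but the saturation lemma is not proved; it is essentially \Cref{coro:main}, which the paper derives \emph{from} \Cref{thm:main}, so using it here is circular unless you give an independent argument. Your sketch (tracking the $2$-core by a differential-equation method, arguing expansion and path-abundance in a random kernel, and invoking $2$-edge-connectedness of the $H_i$) is a plausible-sounding research programme but not a proof: none of the quantitative claims (positive edge density of the accepted kernel, abundance of edge-disjoint paths sufficient to block ``almost every'' in-giant pair, and the amplification via tree branches in the barely supercritical window) is established or reducible to a cited result. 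The paper sidesteps all of this. For $t=cn/2$ with $c>1$, it uses the small-class bound $|\cl(n)|\leq n!c^n$ (\Cref{thm:small_classes}) to show $\numberEdges{\pro{n}{n\log n}}\leq(1+\smallo{1})n$ whp, whence $\ex{\pro{n}{t}}=\smallo{n}$ and $\rej{t}\geq\ex{\er{n}{t}}-\ex{\pro{n}{t}}$; this is short and requires none of your differential-equation machinery. For $n^{2/3}\ll s\ll n$ the paper reveals $\pro{n}{n/2+s/2}$, decomposes its largest component into $\geq\ell$ connected pieces of comparable weight via \Cref{lem:decomposition}, and then sprinkles the remaining edges, using the absence of a $K_\ell$ minor together with a weighted Tur\'an bound (\Cref{lem:weighted_turan}) to force $\Th{s^3/n^2}$ rejections. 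Neither of these two ideas---the small-class enumeration bound and the decompose-and-sprinkle argument---appears in your proposal, and without them (or a worked-out substitute for the saturation lemma) the lower bounds remain unproved.
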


\begin{figure}
	\includegraphics[scale=0.6]{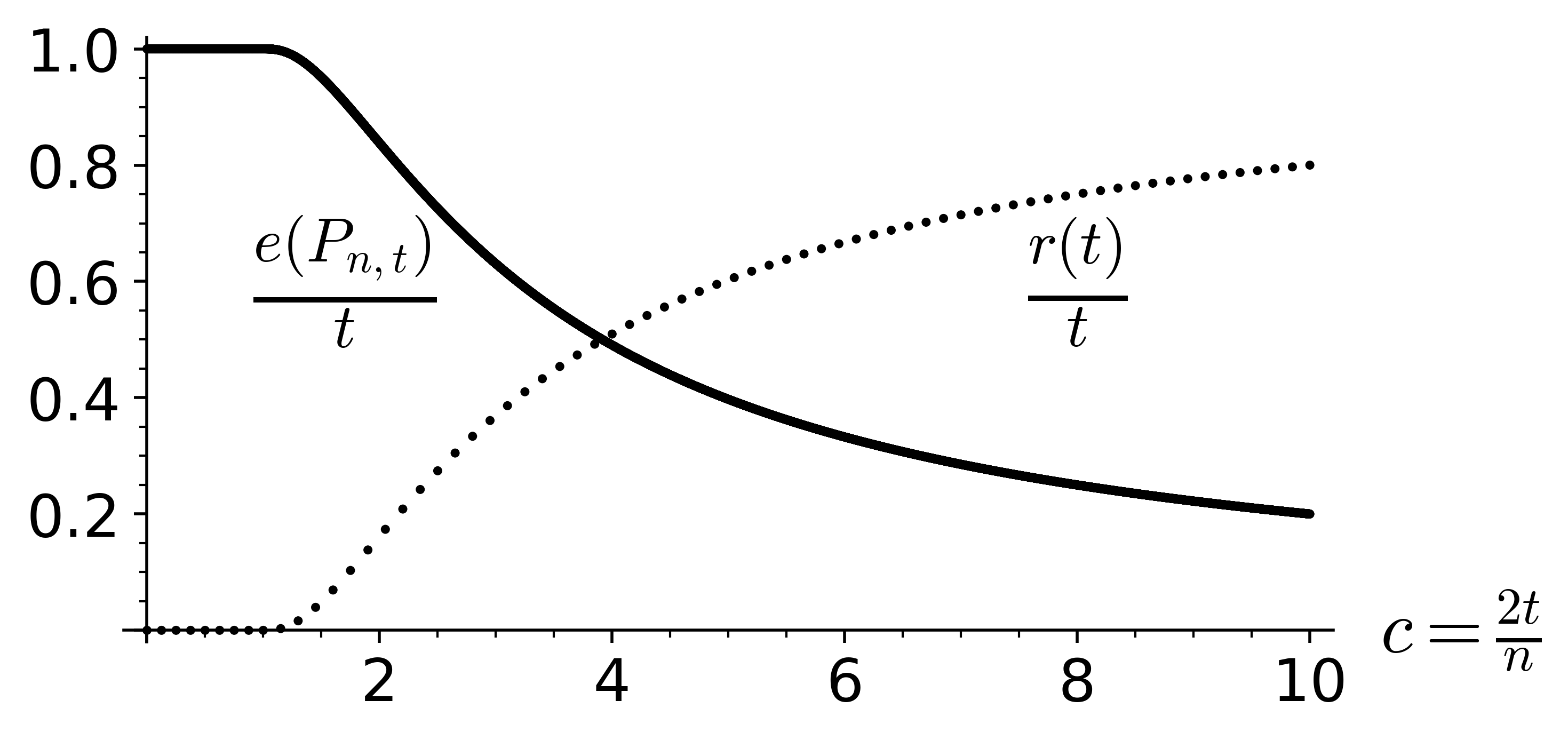}
	\caption{The fraction $\numberEdges{\pro{n}{t}}/t$ of edges that are accepted up to step $t=cn/2$ (solid line) and the fraction $\rej{t}/t$ of rejected edges (dotted line). For $c>1$ we have $\numberEdges{\pro{n}{t}}/t\sim \func{c}/c$ and $\rej{t}/t\sim 1-\func{c}/c$.}
	\label{fig:fraction}
\end{figure}

When $t=n/2+s$ for $s=\bigo{n^{2/3}}$, the statement that \whp\ $\rej{t}=\bigo{h}$ can be equivalently formulated as follows: For each $\lambda>0$ there exists a $C=C(\lambda)$ such that $\prob{\rej{t}<C}>1-\lambda$ for all sufficiently large $n\in\N$, i.e., the statement is \lq slightly weaker\rq\ than having whp $\rej{t}=\bigo{1}$.

In the case $t=cn/2$ for $c>1$ the statement that \whp\ $\rej{t}=\left(c-\func{c}+\smallo{1}\right)n/2$ can be simplified to \whp\ $\numberEdges{\pro{n}{t}}=\left(\func{c}+\smallo{1}\right)n/2$. Using \Cref{thm:main} we obtain the following nice, alternative description of a $\cl$-constrained random graph process: In the very early stage of the process when $t\leq n/2+\smallo{n}$, \lq almost\rq\ all edges are accepted. More formally, \whp\ the \lq next\rq\ edge $e_{t+1}$ will be accepted as long as $t\leq n/2+\smallo{n}$. However, this changes when $t=cn/2$ for $c>1$: The acceptance mainly depends whether or not both of the endpoints of $e_{t+1}$ lie in the largest component of $\pro{n}{t}$.

\begin{coro}\label{coro:main}
Let $\cl$ be a class of graphs satisfying the properties \ref{def:classA}--\ref{def:classF} in \Cref{def:class} and $\left(\pro{n}{t}\right)_{t=0}^{N}$ be the $\cl$-constrained random graph process. Let $t=t(n)\in\left[N\right]$ be such that $t=cn/2$ for a constant $c>1$, and let $L=\largestcomponent{\pro{n}{t}}$ denote the largest component of $\pro{n}{t}$. Then \whp\ the following hold.
\begin{enumerate}
\item\label{coro:mainA}
If $e_{t+1}\subseteq \vertexSet{\Largestcomponent}$, then $e_{t+1}$ is rejected.
\item\label{coro:mainB}
If $e_{t+1}\nsubseteq \vertexSet{\Largestcomponent}$, then $e_{t+1}$ is accepted.
\end{enumerate}
\end{coro}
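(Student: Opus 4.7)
The plan is to bound the two ``bad'' events
\begin{align*}
B_1 &:= \{e_{t+1}\subseteq \vertexSet{\Largestcomponent}\text{ and } e_{t+1}\text{ is accepted}\},\\
B_2 &:= \{e_{t+1}\nsubseteq \vertexSet{\Largestcomponent}\text{ and } e_{t+1}\text{ is rejected}\},
\end{align*}
which are the negations of \ref{coro:mainA} and \ref{coro:mainB}, and to show $\prob{B_1\cup B_2}=\smallo{1}$. The first input is that by property \ref{def:classE} (weak addability) any edge between distinct components of $\pro{n}{s}$ is accepted, so the component partitions of $\pro{n}{t}$ and $\er{n}{t}$ coincide; classical supercritical \ER\ theory then yields whp that $\Largestcomponent$ has $(\sol{c}+\smallo{1})n$ vertices while every other component has at most $\bigo{\log n}$ vertices.

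For $B_2$: weak addability forces $e_{t+1}$ to sit inside a single non-$\Largestcomponent$ component whenever $B_2$ occurs. Since $e_{t+1}$ is uniform over the $\binom{n}{2}-t=\Th{n^{2}}$ remaining edges, this yields
\[
\prob{B_2}\leq \frac{\sum_{C\neq \Largestcomponent}\binom{|C|}{2}}{\binom{n}{2}-t}=\bigo{\frac{(1-\sol{c})n\log n}{n^{2}}}=\smallo{1}.
\]
For $B_1$: I would combine $\prob{B_2}=\smallo{1}$ with two rate identities. First, $\prob{e_{t+1}\subseteq \vertexSet{\Largestcomponent}}=\binom{|\vertexSet{\Largestcomponent}|}{2}/(\binom{n}{2}-t)=\sol{c}^{2}+\smallo{1}$. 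Second, the per-step rejection probability $\prob{e_{t+1}\text{ rejected}}=\sol{c}^{2}+\smallo{1}$ is obtained from $r(t)=(c-\func{c}+\smallo{1})n/2$ of \Cref{thm:main} together with $f'(c)=1-\sol{c}^{2}$ (\Cref{lem:function}). These give
\[
\prob{B_1}=\prob{e_{t+1}\subseteq \vertexSet{\Largestcomponent}}-\prob{e_{t+1}\text{ rejected}}+\prob{B_2}=\smallo{1},
\]
whence $\prob{B_1\cup B_2}\leq \prob{B_1}+\prob{B_2}=\smallo{1}$.

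The main obstacle is justifying the per-step rejection rate $\sol{c}^{2}+\smallo{1}$: naive differentiation of $r(t)$ is too coarse, since the $\smallo{n}$ error of \Cref{thm:main} alone already swamps a single step. I would instead extract this rate from the structural analysis underlying the proof of \Cref{thm:main}, which should establish whp the stronger structural statement that the set of \emph{addable} edges in $\vertexSet{\Largestcomponent}$---non-edges $e$ of $\pro{n}{t}$ with $\pro{n}{t}+e\in\cl$---has size $\smallo{n^{2}}$. Markov's inequality applied conditionally to $\pro{n}{t}$ then gives $\prob{B_1}=\smallo{1}$ directly.
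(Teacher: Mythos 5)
Your decomposition into the bad events $B_1$ and $B_2$, and your treatment of $B_2$ via weak addability plus the $\bigo{\log n}$ bound on the second-largest component, track the paper's argument for part \ref{coro:mainB} closely. Your diagnosis of the obstacle for $B_1$ is also exactly right: the $\smallo{n}$ error in \Cref{thm:main} swamps a single step, so one cannot extract the per-step rejection rate by naive differencing, and the statement you really need is that whp the number of addable non-edges inside $\vertexSet{\Largestcomponent}$ is $\smallo{n^2}$.

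However, your proposed repair of $B_1$ leaves a genuine gap: you assert that the analysis underlying \Cref{thm:main} ``should establish'' that addable-edge bound, but the proof of \Cref{thm:main} for $c>1$ proceeds by comparing excesses of $\pro{n}{t}$ and $\er{n}{t}$ (via \Cref{thm:small_classes} and the connectivity of $\pro{n}{\bar t}$ at $\bar t=n\log n$); it says nothing about forbidden or addable non-edges at a fixed step, so the claim does not follow from that argument. The paper closes exactly this gap with \Cref{lem:rejected}, whose two inputs are (i) by property \ref{def:classD} the set of forbidden edges is monotone non-decreasing in $t$, so $\forb{\pro{n}{t}}$ is a lower bound for every later step, and (ii) if $\forb{\pro{n}{t}}$ exceeded $(1+\varepsilon)\sol{c}^2 n^2/2$, then over a window $\{t+1,\dots,t_2\}$ of linearly many steps each query would be rejected with probability at least roughly $\forb{\pro{n}{t}}/N$, and a binomial concentration bound would force $\rej{t_2}-\rej{t}$ to exceed what \Cref{thm:main} permits. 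This monotone window-averaging is precisely what upgrades the $\smallo{n}$ error on $\rej{t}$ to an $\smallo{n^2}$ error on $\forb{\pro{n}{t}}$ (yielding \Cref{thm:forbidden}), after which part \ref{coro:mainA} follows as in your sketch by subtracting the $\smallo{n^2}$ forbidden edges that lie outside $\Largestcomponent$. Without this lemma (or an equivalent device), your estimate for $\prob{B_1}$ is unproved.
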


Using \Cref{thm:main} we can determine the asymptotic number of queried edges $\con{m_0}$ until $m_0$ of them have been accepted in the case $m_0=cn/2$ for $1<c<2$. In particular, this answers the open problem on $\con{3n/4}$ for the property $\cl$ of being planar from \cite{GerkeSchlatterStegerTaraz2008}.

\begin{coro}\label{coro:considered}
	Let $\cl$ be a class of graphs satisfying the properties \ref{def:classA}--\ref{def:classF} in \Cref{def:class} and $\con{m_0}$ be as defined in \eqref{eq:10}. If $m_0=cn/2$ for a constant $c\in(1,2)$, then \whp
	\begin{align*}
		\con{m_0}=\left(\invFunc{c}+\smallo{1}\right)n/2.
	\end{align*}
	In particular, \whp\ $\con{3n/4}=\left(\invFunc{3/2}+\smallo{1}\right)n/2$, where $\invFunc{3/2}=1.6188\ldots$.
\end{coro}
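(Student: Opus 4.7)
The plan is to invert the fourth case of \Cref{thm:main} by sandwiching, using the monotonicity of $t \mapsto \numberEdges{\pro{n}{t}}$ in $t$. A prerequisite is that $f:(1,\infty)\to(1,2)$ is a continuous strictly increasing bijection, so that $\invFunc{c}$ is a well-defined constant in $(1,\infty)$ for every $c\in(1,2)$; this is a routine analytic check of the explicit formula for $f$ together with standard smoothness and monotonicity of $c\mapsto\sol{c}$, and is the content of \Cref{lem:function}. The numerical identity $\invFunc{3/2}=1.6188\ldots$ then follows by numerically solving $\func{c}=3/2$, i.e.\ $2\sol{c}+c(1-\sol{c})^2=3/2$ coupled with $1-\sol{c}=e^{-c\sol{c}}$.

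For the main equality, I would fix an arbitrary $\varepsilon>0$ and set $c_{\pm}:=\invFunc{c}\pm\varepsilon$, choosing $\varepsilon$ small enough that $1<c_-<c_+$. Applying the fourth case of \Cref{thm:main} separately at $t_{\pm}:=c_{\pm}n/2$ yields that \whp
\begin{align*}
	\numberEdges{\pro{n}{t_-}} = \left(\func{c_-}+\smallo{1}\right)n/2\quad\text{and}\quad \numberEdges{\pro{n}{t_+}} = \left(\func{c_+}+\smallo{1}\right)n/2.
\end{align*}
Strict monotonicity of $f$ gives $\func{c_-}<c<\func{c_+}$, so \whp\ $\numberEdges{\pro{n}{t_-}}<m_0<\numberEdges{\pro{n}{t_+}}$. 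Since $t\mapsto \numberEdges{\pro{n}{t}}$ is non-decreasing and, by \eqref{eq:10}, $\con{m_0}$ is the smallest $t$ with $\numberEdges{\pro{n}{t}}=m_0$, this forces $t_-\le \con{m_0}\le t_+$ \whp. Equivalently, $\left|2\con{m_0}/n-\invFunc{c}\right|\le\varepsilon$ \whp, and since $\varepsilon>0$ was arbitrary the conclusion $\con{m_0}=\left(\invFunc{c}+\smallo{1}\right)n/2$ follows.

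The main obstacle is essentially absent: the whole argument is a one-line inversion of \Cref{thm:main}. The only care needed is that combining the two \whp\ statements at $c_-$ and at $c_+$ is a trivial union bound, and that the final step of extracting a single vanishing $\smallo{1}$ error from the $\varepsilon$-family of \whp\ statements is the standard diagonal argument (pick a sequence $\varepsilon_n\to 0$ slowly enough that both \whp\ events at $c\pm\varepsilon_n$ still hold simultaneously).
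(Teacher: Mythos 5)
Your proposal is correct and is exactly the argument the paper has in mind: the paper dismisses the proof in one sentence as ``follows directly from \Cref{thm:main} and the observation that $f$ is strictly increasing,'' and your sandwiching by $t_{\pm}=(\invFunc{c}\pm\varepsilon)n/2$ together with the monotonicity of $t\mapsto\numberEdges{\pro{n}{t}}$ and the continuity/bijectivity of $f$ from \Cref{lem:function} is precisely the intended inversion. Nothing to add.
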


We note that \Cref{coro:considered} follows directly from \Cref{thm:main} and the observation that $f$ is strictly increasing (see \Cref{lem:function}\ref{lem:functionB}).

Our next main result provides the asymptotic order of the largest component of $\acc{n}{m_0}$.

\begin{thm}\label{thm:largestComponent}
	Let $\cl$ be a class of graphs satisfying the properties \ref{def:classA}--\ref{def:classF} in \Cref{def:class} and $\left(\pro{n}{t}\right)_{t=0}^{N}$ be the $\cl$-constrained random graph process. Let $m_0=m_0(n)\in\left[N\right]$ and $s=s(n)$. Let $\acc{n}{m_0}$ be defined as in \eqref{eq:9} and let $\numberVertices{\largestcomponent{\acc{n}{m_0}}}$ denote the number of vertices in the largest component of $\acc{n}{m_0}$. Then \whp
	\begin{align*}
		\numberVertices{\largestcomponent{\acc{n}{m_0}}}=
		\begin{cases}
			\bigo{\log n} & \text{if} ~~ m_0=cn/2 ~~\text{for}~~ c<1;
			\\
			\left(1/2+\smallo{1}\right)n^2/s^2\log\left(s^3/n^2\right) & \text{if} ~~ m_0=n/2-s ~~\text{for}~~ n^{2/3}\ll s\ll n;
			\\
			\Th{n^{2/3}}& \text{if} ~~ m_0=n/2+s ~~\text{for}~~ s=\bigo{n^{2/3}};
			\\
			\left(4+\smallo{1}\right)s& \text{if} ~~ m_0=n/2+s ~~\text{for}~~ n^{2/3}\ll s\ll n;
			\\
			\left(\sol{\invFunc{c}}+\smallo{1}\right)n& \text{if} ~~ m_0=cn/2 ~~\text{for}~~ 1<c<2;
			\\
			\left(1+\smallo{1}\right)n& \text{if} ~~ m_0=cn/2 ~~\text{for}~~ c=2;
			\\
			n& \text{if} ~~ m_0=cn/2 ~~\text{for}~~ c>2.
		\end{cases}
	\end{align*}
\end{thm}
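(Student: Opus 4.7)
The plan is to reduce \Cref{thm:largestComponent} to a statement about the \ER\ graph $\er{n}{t_0}$, where $t_0:=\con{m_0}$ is the step at which $\acc{n}{m_0}=\pro{n}{t_0}$. Two ingredients carry the reduction. First, because $\cl$ is weakly addable (property \ref{def:classE}), every queried edge lying between two distinct components of the current graph is automatically accepted, so $\pro{n}{t}$ and $\er{n}{t}$ undergo the same sequence of component mergers. Consequently the partition of $[n]$ into connected components of $\pro{n}{t}$ coincides with that of $\er{n}{t}$ at every step, and in particular $\numberVertices{\largestcomponent{\pro{n}{t}}}=\numberVertices{\largestcomponent{\er{n}{t}}}$. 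Second, because $\rej{t}$ is non-decreasing in $t$, \Cref{thm:main} can be inverted to give $t_0$ \whp\ as an explicit function of $m_0$.

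Concretely, I solve $t_0-\rej{t_0}=m_0$ using \Cref{thm:main} to obtain, \whp: $t_0=m_0$ when $m_0=cn/2$ with $c<1$ or when $m_0=n/2-s$ with $s\gg n^{2/3}$; $t_0=m_0+\bigo{h}$ when $m_0=n/2+s$ with $s=\bigo{n^{2/3}}$; $t_0=n/2+\left(1+\smallo{1}\right)s$ when $m_0=n/2+s$ with $n^{2/3}\ll s\ll n$ (the $\Th{s^3/n^2}$ rejections are $\smallo{s}$ precisely because $s\ll n$); $t_0=\left(\invFunc{c}+\smallo{1}\right)n/2$ when $m_0=cn/2$ with $1<c<2$ (solvable thanks to the strict monotonicity of $f$ recorded in \Cref{lem:function}); and $t_0=\smallomega{n}$, in fact $t_0\gg n\log n$, whenever $m_0\geq n$.

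I then plug each $t_0$ into the corresponding classical result on $\numberVertices{\largestcomponent{\er{n}{t_0}}}$ — the $\bigo{\log n}$ bound in the subcritical phase, the $\left(1/2+\smallo{1}\right)\left(n^2/s^2\right)\log\left(s^3/n^2\right)$ asymptotics in the barely subcritical window, the $\Th{n^{2/3}}$ bound in the critical window, the $\left(4+\smallo{1}\right)s$ asymptotics in the barely supercritical window, and $\left(\sol{c}+\smallo{1}\right)n$ in the strictly supercritical phase — which yields the first five rows of the statement. The $c=2$ row then follows by combining $t_0=\smallomega{n}$ with $\sol{c}\to 1$ as $c\to\infty$. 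For $c>2$ it remains to verify that $t_0$ has passed the \ER\ connectivity threshold $\left(1/2+\smallo{1}\right)n\log n$, which via the component coincidence gives $\numberVertices{\largestcomponent{\pro{n}{t_0}}}=n$ \whp.

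The main obstacle I expect lies in the two regimes $c\geq 2$, which fall outside the scale $t=\bigo{n}$ where \Cref{thm:main} is informative. For them I need an upper-tail estimate analogous to \Cref{thm:Gerke} but valid for the general class $\cl$: an exponentially small probability that $\numberEdges{\pro{n}{\delta n^2}}\geq \left(1+\varepsilon\right)n$, which then forces $\con{m_0}=\Omega(n^2)$ whenever $m_0>n$ and pushes the associated \ER\ graph comfortably past its connectivity threshold. The only other non-routine bookkeeping is checking, in the barely supercritical window, that the shift of $t_0$ by the $\Th{s^3/n^2}$ rejections preserves the leading constant $4$ in $\left(4+\smallo{1}\right)s$; here the hypothesis $s\ll n$ is exactly what is needed.
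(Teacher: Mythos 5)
Your proposal is correct and takes essentially the same approach as the paper: use \Cref{rem:component_structure} to identify the component sizes of $\pro{n}{t}$ and $\er{n}{t}$, invert \Cref{thm:main} to locate $\con{m_0}$ (the paper does this via the sandwich implication \eqref{eq:6} rather than an explicit solve, but the two are equivalent), plug into classical \ER\ asymptotics, and treat $c\geq 2$ separately via monotonicity and limits. The ingredient you flag as a possible obstacle for $c>2$ is in fact already available: the paper proves in \eqref{eq:5}, as part of the proof of \Cref{thm:main} and using \Cref{thm:small_classes}, that \whp\ $\numberEdges{\pro{n}{n\log n}}\leq(1+\smallo{1})n$; this weaker bound (at $n\log n$ rather than $\delta n^2$, and only \whp\ rather than exponentially small) already pushes $\con{m_0}$ past the connectivity threshold of $\er{n}{t}$, which is all you need.
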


For the property $\cl$ of being planar \Cref{thm:largestComponent} reveals a different behaviour of $\acc{n}{m_0}$ in the \lq sparse\rq\ regime than that of the uniform random planar graph $P(n,m_0)$. More formally, if $m_0=cn/2$ for $1<c<2$, then \whp
\begin{align}\label{eq:11}
\numberVertices{\largestcomponent{\acc{n}{m_0}}}=\left(\sol{\invFunc{c}}+\smallo{1}\right)n~~\quad>~~\quad\left(c-1+\smallo{1}\right)n=\numberVertices{\largestcomponent{P(n,m_0)}},
\end{align}
where the last equality follows from \cite{KangLuczak2012}. We refer to \Cref{fig:cv} for an illustration of $\numberVertices{\largestcomponent{\acc{n}{m_0}}}$ and $\numberVertices{\largestcomponent{P(n,m_0)}}$.

\begin{figure}
	\includegraphics[scale=0.535]{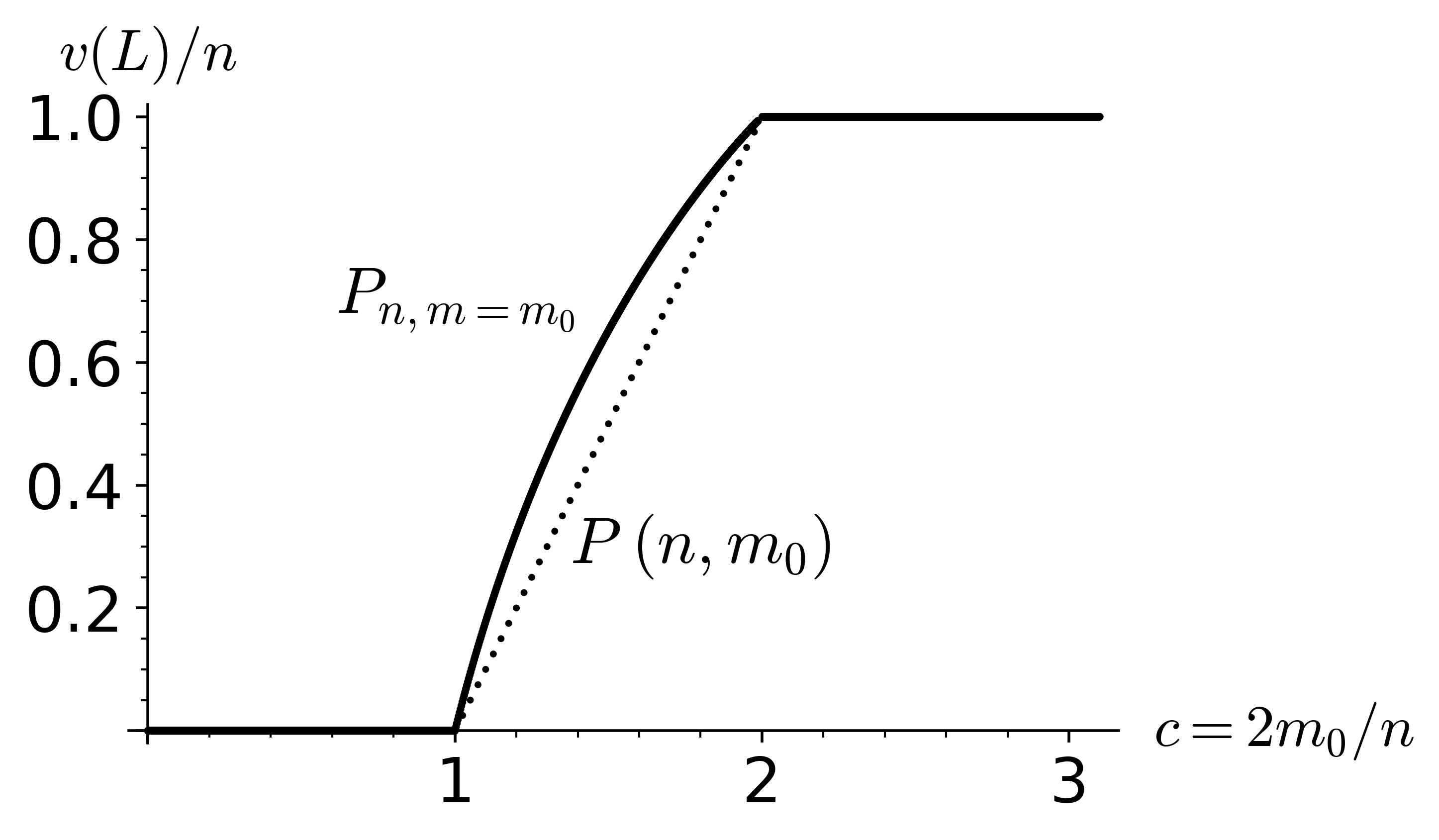}
	\includegraphics[scale=0.465]{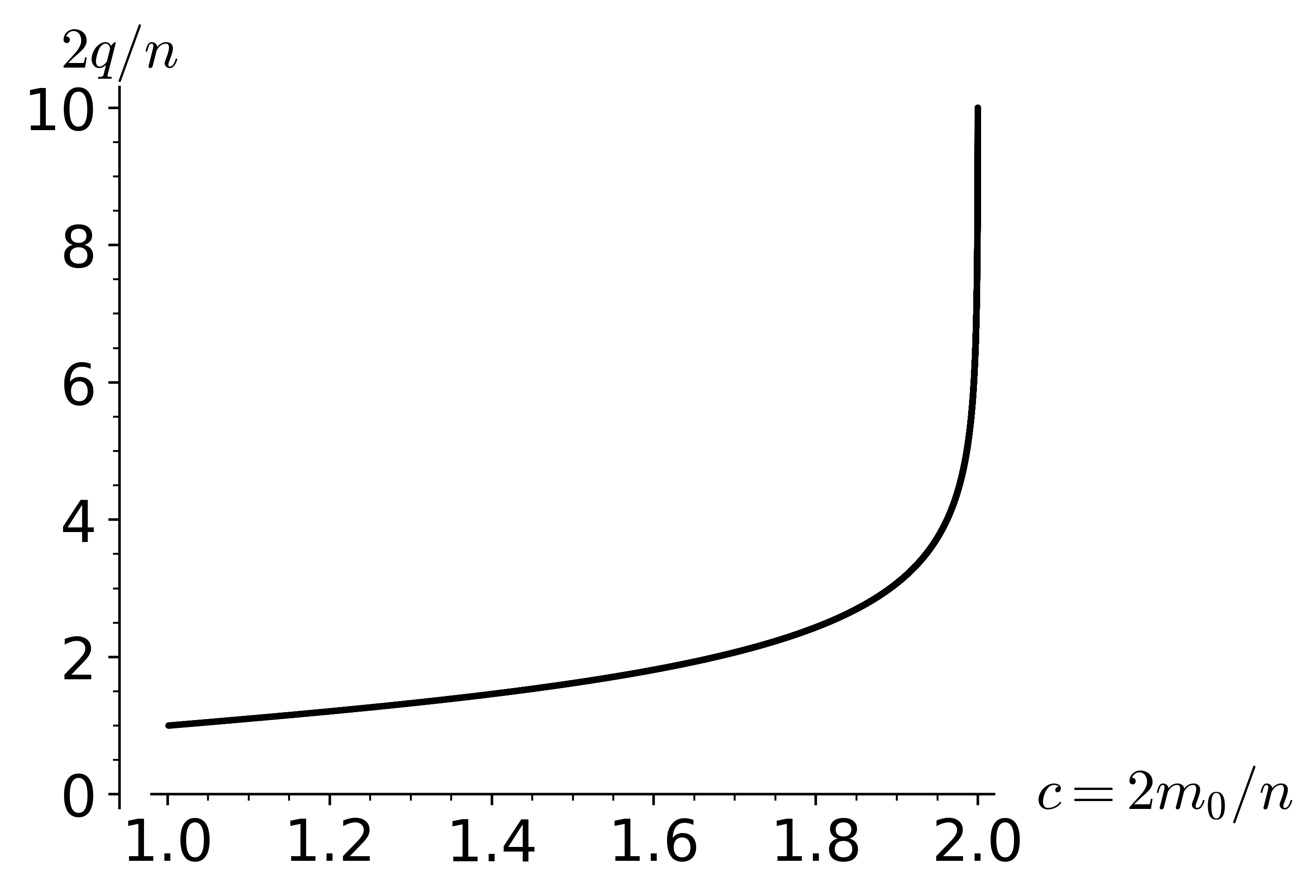}
	\caption{On the left-hand side: the fraction of vertices lying in the largest component of $\acc{n}{m_0}$ (solid line) and in the uniform random planar graph $P(n,m_0)$ (dotted line) are shown. For $1<c<2$ we have $\numberVertices{\largestcomponent{\acc{n}{m_0}}}/n\sim \sol{\invFunc{c}}$ and $\numberVertices{\largestcomponent{P(n,m_0)}}/n\sim c-1$. On the right-hand side: $2\con{m_0}/n\sim f^{-1}(c)$ is illustrated for the case that $m_0=cn/2$ for $c\in(1,2)$. }
	\label{fig:cv}
\end{figure}

\subsection{Outline of the paper}
The rest of the paper is structured as follows. After providing the necessary definitions and concepts in \Cref{sec:preliminaries}, we prove \Cref{thm:main} in \Cref{sec:proof_main}. In \Cref{sec:proof_coro} we use so-called \lq addable\rq\ and \lq forbidden\rq\ edges to prove \Cref{coro:main}. Finally in \Cref{sec:accepted_graph}, we prove \Cref{thm:largestComponent}.

\section{Preliminaries}\label{sec:preliminaries}
\subsection{Notations for graphs}
We begin with some notations for graphs that will be used in the rest of the paper.
\begin{definition}
	Given a graph $H$ we denote by
	\begin{itemize}
		\item 
		$\vertexSet{H}$ the vertex set of $H$ and
		\item[]
		$\numberVertices{H}$ the order of $H$, i.e., the number of vertices in $H$;
		\item 
		$\edgeSet{H}$ the edge set of $H$ and
		\item[]
		$\numberEdges{H}$ the size of $H$, i.e., the number of edges in $H$;	
		\item
		$\maxdegree{H}$ the maximum degree of $H$;
		\item 
		$\largestcomponent{H}$ the largest component of $H$;
		\item
		$\core{H}$ the 2-core of $H$, which is the maximal subgraph of $H$ with minimum degree at least two;
		\item
		$\ex{H}:=\numberEdges{H}-\numberVertices{H}+\nt{H}$ the excess of $H$, where $\nt{H}$ is the number of tree components in $H$.
	\end{itemize}
\end{definition}
\begin{definition}\label{def:graph_class}
	Given a class $\cl$ of graphs, we write
	$\cl(n)$ for the subclass of $\cl$ containing the graphs on vertex set $[n]$ 
	and $\cl(n,m)$ for the subclass of $\cl$ containing the graphs on vertex set $[n]$ with $m$ edges, respectively.
\end{definition}

\subsection{Properties of the \ER\ random graph}
In this section we state the properties of the \ER\ random graph $\er{n}{t}$ which we will use in our proofs. First we consider the case $t=n/2+s$ for $n^{2/3}\ll s\ll n$ and then the case $t=cn/2$ for $c>1$.
\begin{thm}[\cite{Bollobas1982,Luczak1990,Luczak1991,Pavlov1977}]\label{thm:er}
	Let $t=t(n)\in\left[N\right]$ and $s=s(n)$ be such that $t=n/2+s$ for $n^{2/3}\ll s\ll n$ and $\Largestcomponent=\largestcomponent{\er{n}{t}}$ be the largest component of the \ER\ random graph $\er{n}{t}$. Furthermore, let $C=\core{L}$ be the 2-core of $\Largestcomponent$ and $F$ be the forest obtained from $L$ by deleting the edges of $C$. Then \whp
	\begin{enumerate}
		\item\label{thm:erA}
		$\numberVertices{\Largestcomponent}=\left(4+\smallo{1}\right)s$;
		\item\label{thm:erE}
		$\ex{\er{n}{t}}=\Th{s^3/n^2}$;
		\item\label{thm:erB}
		each tree component of $F$ is of order $\smallo{s}$;
		\item\label{thm:erC}
		$\maxdegree{\er{n}{t}}=\left(1+\smallo{1}\right)\log n/\log\log n$;
		\item\label{thm:erD}
		$\maxdegree{C}=3$ if in addition $s\ll n^{3/4}$.
	\end{enumerate}
\end{thm}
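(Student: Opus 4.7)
The plan is to establish the five parts separately, with the common toolkit being the standard BFS exploration coupled to a Poisson branching process together with first- and second-moment estimates. Writing $\varepsilon := 2s/n$, we have $\varepsilon \to 0$ and $\varepsilon^3 n \to \infty$, which places $\er{n}{t}$ in the classical \emph{just supercritical} regime, and all five statements are quantitative versions of well-known facts in that regime.

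For parts (a) and (c), I would run BFS exploration on $\er{n}{t}$: each offspring count is distributed as $\bin{n-k}{p}$ with $p\sim (1+\varepsilon)/n$, hence up to standard coupling errors it is Poisson with mean $1+\varepsilon$. The survival probability satisfies $\sol{1+\varepsilon}=2\varepsilon-\bigo{\varepsilon^2}$, and a second-moment argument on the number of vertices in components of size above a suitable threshold gives concentration of $\numberVertices{L}$ around $\sol{1+\varepsilon}\cdot n=\left(4+\smallo{1}\right)s$, yielding (a). For (c), each tree component of $F$ is stochastically dominated by the \emph{dual} subcritical branching process with offspring distribution Poisson of mean $(1+\varepsilon)\left(1-\sol{1+\varepsilon}\right)=1-\varepsilon+\smallo{\varepsilon}$; the classical exponential tail then gives that \whp\ the largest tree component has order $\bigo{\varepsilon^{-2}\log n}=\smallo{s}$, using $s\gg n^{2/3}$.

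For part (b), I would argue that $\ex{\er{n}{t}}$ is dominated by the contribution of the $2$-core of the giant, since \whp\ no other complex component exists and both tree and unicyclic components contribute $0$ to $\ex{\cdot}$. Using the kernel representation of $\core{L}$ as a sparse configuration-model-type multigraph on $\Th{s^3/n^2}$ edges, one verifies $\expec{\ex{\er{n}{t}}}=\Th{s^3/n^2}$ together with a matching variance bound, which gives concentration.

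For part (d), the degree of a fixed vertex is $\bin{n-1}{2t/(n(n-1))}$, essentially Poisson with mean $\Th{1}$; a union bound against the standard Poisson tail $\prob{\mathrm{Poisson}(c)\ge k}\le (ec/k)^k$ gives $\maxdegree{\er{n}{t}}=(1+\smallo{1})\log n/\log\log n$. For (e), under $s\ll n^{3/4}$ I would show that \whp\ no vertex of $\core{L}$ has degree $\ge 4$: after contracting degree-$2$ paths to obtain the kernel, the expected number of kernel vertices of degree $\ge 4$ turns out to be $\bigo{s^4/n^3}=\smallo{1}$. Combined with the fact that $\core{L}$ has minimum degree at least $2$ and with the excess bound from (b), which excludes the single-cycle case, this forces $\maxdegree{\core{L}}=3$ \whp. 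The main obstacle is the variance computation in (b) and the kernel-degree estimate in (e); both rely on fine control of the distribution of $\core{L}$, and this is exactly where the classical analyses of \Bollobas, \Luczak\ and Pavlov do the heaviest lifting.
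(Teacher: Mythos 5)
The paper does not actually prove this theorem: it states it and then, in the remark immediately following, simply attributes part \ref{thm:erA} and \ref{thm:erE} to \L{}uczak (1990), part \ref{thm:erC} to Bollob\'as (1982), part \ref{thm:erD} to \L{}uczak (1991), and part \ref{thm:erB} to Pavlov (1977) via the observation that, conditioned on $\numberVertices{L}$ and $\numberVertices{C}$, the hanging trees form a uniformly random forest with $\numberVertices{C}^2 = \smallomega{\numberVertices{L}}$. Your proposal instead sketches reproofs from scratch, which is a genuinely different route, and the global strategy (BFS/branching-process coupling with first- and second-moment estimates) is indeed the right one and is essentially what those cited works do. However, a few of the sketches have real gaps.

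For part \ref{thm:erB} (the hanging trees), two issues. First, the stochastic-domination claim is not justified as stated: the trees hanging off the 2-core are not independent samples of the dual subcritical branching process but a uniformly random rooted forest with $\numberVertices{C}$ roots on $\numberVertices{L}$ vertices, i.e.\ the dual-GW trees \emph{conditioned} on their total size; this is precisely the object Pavlov analysed, and making the ``domination'' rigorous is nontrivial. Second, even granting it, your bound $\bigo{\varepsilon^{-2}\log n}=\bigo{(n/s)^2\log n}$ is $\smallo{s}$ only when $s\gg n^{2/3}(\log n)^{1/3}$, so it misses the regime $n^{2/3}\ll s \ll n^{2/3}(\log n)^{1/3}$. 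The fix is to take the union bound over the $\Th{s^2/n}$ hanging trees rather than over all $n$ vertices, which yields $\bigo{\varepsilon^{-2}\log(s^3/n^2)}$; this \emph{is} $\smallo{s}$ for the entire range $s\gg n^{2/3}$ since $(n^2/s^3)\log(s^3/n^2)\to 0$ whenever $s^3/n^2\to\infty$.

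For parts \ref{thm:erE} and \ref{thm:erD} the sketch is thinner than it may appear. The ``matching variance bound'' for the excess and the ``configuration-model-type'' control of the kernel that forces $\maxdegree{C}=3$ when $s\ll n^{3/4}$ are exactly where the heavy lifting in \L{}uczak's two papers sits: one needs the contiguity of the 2-core with a model of given degree sequence, together with precise estimates for the kernel degree distribution, neither of which is a routine second-moment computation. For the purposes of this paper, citing the primary literature (as the authors do) is substantially cleaner; if you do want a self-contained derivation, those two steps need to be fleshed out considerably.
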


We note that \ref{thm:erA} and \ref{thm:erE} are shown in \cite{Luczak1990}, \ref{thm:erC} in \cite{Bollobas1982}, and \ref{thm:erD} in \cite{Luczak1991}, respectively. Furthermore, \ref{thm:erB} follows by the fact from \cite{Pavlov1977} that conditioned on fixed values of $\numberVertices{\Largestcomponent}$ and $\numberVertices{C}$, \whp\ all tree components of $F$ are of order $\smallo{\numberVertices{\Largestcomponent}}$ as long as $\numberVertices{C}^2=\smallomega{\numberVertices{\Largestcomponent}}$. Furthermore, \whp\ $\er{n}{t}$ satisfies this condition, because \whp\ $\numberVertices{C}=\Th{s^2/n}$ by \cite{Luczak1991} and $\numberVertices{\Largestcomponent}=\left(4+\smallo{1}\right)s$ by \ref{thm:erA}.

Next we collect some properties of the \ER\ random graph $\er{n}{t}$ when $t=cn/2$ for $c>1$.

\begin{thm}[\cite{ErdoesRenyi1960}]\label{thm:erInt}
	Let $t=t(n)\in\left[N\right]$ be such that $t=cn/2$ for $c>1$. Let $G=\er{n}{t}$ be the \ER\ random graph, and $\Largestcomponent=\largestcomponent{G}$ the largest component of $G$. Furthermore, let $\sol{c}$ and $\func{c}$ be as in \Cref{def:func}. Then \whp
	\begin{enumerate}
		\item\label{thm:erIntA}
		$\numberVertices{\Largestcomponent}=\left(\sol{c}+\smallo{1}\right)n$;
		\item\label{thm:erIntC}
		all components of $G$ apart from $L$ are of order $\smallo{n}$;
		\item\label{thm:erIntB}
		$\ex{G}=\left(c-\func{c}+\smallo{1}\right)n/2$.
	\end{enumerate}
\end{thm}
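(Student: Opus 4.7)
Parts (a) and (b) are the classical supercritical Erdős--Rényi theorems -- a unique giant component $L$ of order $(\sol{c}+\smallo{1})n$ and all other components of order $\smallo{n}$, in fact $\bigo{\log n}$ -- proved in \cite{ErdoesRenyi1960}; I would simply quote them.

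For (c), the plan is to split $\ex{G}=\ex{L}+\ex{G\setminus L}$. Since $c>1$, whp $L$ contains a cycle, so $\nt{L}=0$ and $\ex{L}=\numberEdges{L}-\numberVertices{L}+1$. Outside $L$, by (b) every component has order $\smallo{n}$, and the subgraph induced on the non-giant vertices behaves like a subcritical Erdős--Rényi random graph with effective mean degree $c(1-\sol{c})<1$ (which is strictly below $1$ because the defining equation gives $1-\sol{c}=e^{-c\sol{c}}$); in such a subcritical graph whp the excess is only $\bigo{\log n}$, so $\ex{G\setminus L}=\smallo{n}$. This reduces (c) to estimating $\numberEdges{L}$. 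Since no edge of $G$ crosses components, $\numberEdges{L}=cn/2-\numberEdges{G\setminus L}$, and conditional on the vertex set of $L$ the remainder is essentially an Erdős--Rényi graph on $n'=(1-\sol{c}+\smallo{1})n$ vertices with edge probability $c/n$, whose edge count concentrates (by Chebyshev/Chernoff) around $(c/n)\binom{n'}{2}\sim c(1-\sol{c})^2 n/2$. Combining all estimates,
\begin{align*}
\ex{G}=\frac{cn}{2}-\frac{c(1-\sol{c})^2 n}{2}-\sol{c}\,n+\smallo{n}=\frac{c-\func{c}}{2}\,n+\smallo{n},
\end{align*}
which is the claim, since $\func{c}=2\sol{c}+c(1-\sol{c})^2$.

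The main technical obstacle is the conditioning: the events ``$v\notin L$'' are strongly correlated across $v$, so the ``conditional on the vertex set of $L$'' reduction above is not quite formal and needs a decoupling argument. A clean way is sprinkling -- reveal the first $(c-\varepsilon)n/2$ edges to locate an approximate giant, then use the independence of the remaining $\varepsilon n/2$ edges to estimate $\numberEdges{G\setminus L}$ -- after which everything reduces to the classical \cite{ErdoesRenyi1960} estimates. As a sanity check, expanding $\rho=\sol{c}$ around $c=1$ as $\rho=2\varepsilon-\tfrac{8}{3}\varepsilon^2+\bigo{\varepsilon^3}$ with $\varepsilon=c-1$ makes the leading $(c-\func{c})/2$ expansion vanish to order $\varepsilon^2$, leaving $\ex{G}\sim\tfrac{2}{3}\varepsilon^3 n$, which is consistent with the $\Th{s^3/n^2}$ scaling in \Cref{thm:main} when $c=1+2s/n$ and $s\ll n$.
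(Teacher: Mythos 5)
The paper attributes all three parts to Erd\H{o}s--R\'enyi \cite{ErdoesRenyi1960} and gives no proof, so there is no argument in the paper to compare your sketch against; you go further than the paper by actually deriving (c) from (a), (b), and an estimate on the number of edges outside the giant. Your derivation is correct in substance, with one small slip: under the paper's definition $\ex{H}=\numberEdges{H}-\numberVertices{H}+\nt{H}$, once $L$ contains a cycle we have $\nt{L}=0$ and hence $\ex{L}=\numberEdges{L}-\numberVertices{L}$, not $\numberEdges{L}-\numberVertices{L}+1$; you appear to be using the cyclomatic-number convention. This is an $\bigo{1}$ discrepancy that vanishes in the $\smallo{n}$ error, so the final identity $\ex{G}=\big(cn-c(1-\sol{c})^2 n-2\sol{c} n\big)/2+\smallo{n}=\left(c-\func{c}+\smallo{1}\right)n/2$ is unaffected. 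The conditioning issue you flag is real: in $\er{n}{t}$ the events $v\notin\vertexSet{L}$ are correlated and the remainder conditioned on $\vertexSet{L}$ is not exactly Erd\H{o}s--R\'enyi, so the two-round sprinkling you propose is the right repair (though given the citation, simply quoting (c) as well would suffice). Your sanity check is a nice cross-validation: writing $c-\func{c}=2\sol{c}(c-1)-c\sol{c}^2$ and expanding $\sol{c}=2\varepsilon-\tfrac{8}{3}\varepsilon^2+\bigo{\varepsilon^3}$ with $\varepsilon=c-1$ gives $c-\func{c}=\tfrac{4}{3}\varepsilon^3+\bigo{\varepsilon^4}$, so $\ex{G}\sim\tfrac{2}{3}\varepsilon^3 n$, which under $\varepsilon=2s/n$ is exactly the $\Th{s^3/n^2}$ scaling in \Cref{thm:main}.
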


\subsection{Properties of $\pro{n}{t}$ and $\cl$}
We will often use the following simple observation.
\begin{remark}\label{rem:component_structure}
Due to properties \ref{def:classB} and \ref{def:classE} of \Cref{def:class} there is a path between two vertices in $\pro{n}{t}$ if and only if there is one in $\er{n}{t}$.
\end{remark}

Next, we show that the number of rejected edges up to step $t$ is bounded above by the excess of $\er{n}{t}$. This will be a main ingredient to obtain the upper bounds in \Cref{thm:main}.

\begin{lem}\label{lem:upperBound_excess}
For all $t\in \left[N\right]$ we have
\begin{align*}
\rej{t}\leq \ex{\er{n}{t}}.
\end{align*}
\end{lem}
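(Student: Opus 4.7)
The plan is to prove the inequality by induction on $t$. For the base case $t=0$, both sides are zero: $\pro{n}{0}$ is edgeless so $\rej{0}=0$, while $\er{n}{0}$ has $n$ tree components (isolated vertices) and no edges, giving $\ex{\er{n}{0}}=0-n+n=0$.

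For the inductive step, assume $\rej{t-1}\leq \ex{\er{n}{t-1}}$ and split according to the outcome of the $t$-th query. If $e_t$ is accepted, then $\rej{t}=\rej{t-1}$. On the \ER\ side, $e(\er{n}{t-1})$ grows by $1$ while $v(\er{n}{t-1})=n$ is unchanged and $\nt{\er{n}{t-1}}$ can only decrease (by $0$ or $1$: a new tree component cannot be created by adding an edge). Hence $\ex{\er{n}{t}}\geq \ex{\er{n}{t-1}}\geq \rej{t-1}=\rej{t}$, as needed. The only real work is in the rejection case.

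If $e_t$ is rejected, then $\pro{n}{t-1}\in\cl$ but $\pro{n}{t-1}+e_t\notin\cl$. Properties \ref{def:classE} and \ref{def:classF} in \Cref{def:class} then force both endpoints of $e_t$ to lie in the same component of $\pro{n}{t-1}$ and that component to be non-tree (otherwise weak addability or the tree-component property would make $e_t$ acceptable). By \Cref{rem:component_structure}, $\pro{n}{t-1}$ and $\er{n}{t-1}$ share the same vertex partition into components, so both endpoints of $e_t$ lie in the same component $K$ of $\er{n}{t-1}$, and since $\pro{n}{t-1}$ restricted to $V(K)$ is a connected non-tree subgraph of $K$, the component $K$ itself is non-tree. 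Therefore, adding $e_t$ within the non-tree component $K$ of $\er{n}{t-1}$ increases $e(\cdot)$ by one, leaves $v(\cdot)$ unchanged, and leaves $\nt{\cdot}$ unchanged, so $\ex{\er{n}{t}}=\ex{\er{n}{t-1}}+1\geq \rej{t-1}+1=\rej{t}$.

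The only step that requires any thought is the transfer from $\pro{n}{t-1}$ to $\er{n}{t-1}$ in the rejection case: one needs that a non-tree component of $\pro{n}{t-1}$ remains a non-tree component in $\er{n}{t-1}$, which follows cleanly from \Cref{rem:component_structure} together with the trivial observation that a connected supergraph on the same vertex set of a non-tree graph is itself non-tree. Everything else is bookkeeping on how $v(\cdot)$, $e(\cdot)$, and $\nt{\cdot}$ change under the addition of a single edge.
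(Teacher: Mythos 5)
Your proof is correct and takes essentially the same approach as the paper: the key observation in both is that a rejected edge must have both endpoints in a common non-tree component (by properties \ref{def:classE} and \ref{def:classF}), which via $\pro{n}{t-1}\subseteq\er{n}{t-1}$ means adding it to $\er{n}{t-1}$ raises the excess by exactly one, while accepted edges never decrease it. The paper phrases this as a per-rejection accounting combined with monotonicity of $\ex{\er{n}{\cdot}}$ rather than explicit induction, but that is a cosmetic difference.
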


\begin{proof}
We consider an edge $e_i$ which is rejected in the $\cl$-constrained random graph process. By \Cref{def:class}\ref{def:classE} and \ref{def:classF} the two endpoints of $e_i$ lie in the same component of $\pro{n}{i-1}$, which is not a tree component. Together with the fact $\pro{n}{i-1}\subseteq \er{n}{i-1}$ it implies that adding $e_i$ to $\er{n}{i-1}$ increases the excess by one, i.e., $\ex{\er{n}{i}}=\ex{\er{n}{i-1}}+1$. As $\ex{\er{n}{t}}$ is non-decreasing in $t$, this implies the statement.
\end{proof}

A graph class $\mathcal{A}$ for which there exists a constant $c>0$ such that $\left|\mathcal{A}(n)\right|\leq n!c^n$ for all $n\in\N$ is often called \textit{small} (see e.g., \cite{NorineSeymourThomasWollan2006}). The following statement shows that the class $\cl$ in \Cref{def:class} is small.
\begin{thm}[\cite{NorineSeymourThomasWollan2006}]\label{thm:small_classes}
	Let $\cl$ be a class of graphs satisfying the properties \ref{def:classA}, \ref{def:classC}, and \ref{def:classD} in \Cref{def:class}. Then there exists a constant $c>0$ such that $\left|\cl(n)\right|\leq n!c^n$ for all $n\in\N$. 
\end{thm}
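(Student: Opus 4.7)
The plan is to reduce the statement to the well-studied setting of graphs that avoid a single fixed minor, and then to combine a density estimate with a structural decomposition.

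By properties \ref{def:classA} and \ref{def:classC}, there exists some graph $H$ with $H\notin\cl$; by \ref{def:classD}, every $G\in\cl$ is $H$-minor-free. Hence $\cl\subseteq\mathcal{F}_H$, where $\mathcal{F}_H$ denotes the class of $H$-minor-free graphs, and it suffices to prove the bound for $\mathcal{F}_H$ with the constant $c$ depending only on $H$. For such a class the classical theorem of Mader (with the quantitative refinements of Kostochka and Thomason) supplies a constant $\alpha=\alpha(H)$ such that $\numberEdges{G}\le \alpha\,\numberVertices{G}$ for every $G\in\mathcal{F}_H$; applied to every subgraph, this makes the class $2\alpha$-degenerate. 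Each $G\in\mathcal{F}_H(n)$ therefore admits a vertex ordering $v_1,\ldots,v_n$ in which each $v_i$ has at most $2\alpha$ neighbours among $\{v_{i+1},\ldots,v_n\}$, and is completely determined by this ordering together with the sequence of backward neighbour sets $N_1,\ldots,N_n$.

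The main obstacle is that this naive encoding is by itself much too weak: summing over orderings and over choices of $N_i$'s yields only $n!\cdot\prod_i\binom{n-i}{\le 2\alpha}$, which is of order $(n!)^{1+2\alpha}$ and therefore enormously larger than the claimed $n!\cdot c^n$. Overcoming this gap is the true content of the theorem. The argument of Norine, Seymour, Thomas, and Wollan effectively reduces the problem to counting \emph{unlabelled} graphs in $\mathcal{F}_H$: since each unlabelled graph accounts for at most $n!$ labelled graphs, it suffices to show that $\mathcal{F}_H$ contains at most $c^n$ unlabelled graphs of order $n$. One then iterates the fact that every large $H$-minor-free graph admits a balanced separator of bounded size to build a recursive decomposition whose leaves are bounded-order graphs and in which every split contributes only $\bigo{1}$ bits per vertex; careful bookkeeping of the contributions from separators, bag graphs, and the recursion tree itself then gives the required exponential bound on the number of unlabelled graphs. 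Multiplying by the trivial factor $n!$ for vertex labellings completes the proof.
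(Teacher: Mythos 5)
The paper does not prove this statement; it is quoted directly from Norine--Seymour--Thomas--Wollan and used as a black box, so there is no in-paper argument to compare against. Assessing your sketch on its own terms: the reduction to $H$-minor-free graphs via properties \ref{def:classA}, \ref{def:classC}, \ref{def:classD}, the Mader/Kostochka/Thomason edge-density bound and the resulting bounded degeneracy, and the observation that the naive degeneracy-ordering encoding only yields $(n!)^{1+2\alpha}$ are all correct, and you have rightly identified where the true difficulty lies.

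The substitute argument you offer, however, rests on a false structural claim: $H$-minor-free graphs do \emph{not} admit balanced separators of bounded size. The $k\times k$ grid is planar (hence $K_5$-minor-free) and has treewidth $k=\sqrt{n}$, so its balanced separators have size $\Theta(\sqrt{n})$; the correct statement (Alon--Seymour--Thomas) is that excluding a fixed minor gives balanced separators of size $O(\sqrt{n})$, not $O(1)$. With separators of that size, the assertion that "every split contributes only $O(1)$ bits per vertex" is no longer automatic and your sketch gives no reason to believe it. Under a naive accounting, after the recursive split each of the $\Theta(n)$ edges must record to which ancestor-separator vertex it attaches, which already costs $\Theta(\log n)$ bits per edge and therefore $n^{\Theta(n)}$ rather than $c^{n}$; the bipartite graphs between a bag and the union of its ancestor separators are exactly where the budget is spent, and you leave that estimate unaddressed. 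Finally, this separator-based route is not, to my knowledge, the argument of Norine--Seymour--Thomas--Wollan: their proof is a short direct counting argument that does not pass through a separator hierarchy, so the attribution is also off. In short, the reduction and the diagnosis of the obstacle are sound, but the central step of the proposed proof relies on an incorrect separator bound and leaves the key bookkeeping unjustified.
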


\subsection{Decomposition of graphs}
In the proof of \Cref{thm:main} we will split the largest component of $\pro{n}{t}$ into connected parts of roughly equal size. To that end, we will use the following lemma, which is an extension of \cite[Proposition 4.5]{KrivelevichNachmias2006} to vertex-weighted graphs.
\begin{lem}\label{lem:decomposition}
Let $H$ be a connected graph with maximum degree at most $\Delta\geq 1$. We assign each vertex $x\in\vertexSet{H}$ a vertex-weight $\weight{x}>0$. Assume that $\max_{x\in\vertexSet{H}}\weight{x}\leq M$ for some $M>0$. Then, given $a>0$ there exist disjoint vertex sets $V_1, \ldots, V_r\subseteq \vertexSet{H}$ such that
\begin{itemize}
\item
$H\left[V_i\right]$ is connected for each $i\in[r]$;
\item
$a\leq W_i\leq a \Delta+M$ for each $i\in[r]$;
\item
$W(H)-\sum_{i=1}^{r}W_i<a$,
\end{itemize}
where $W(H):=\sum_{x\in V(H)}\weight{x}$ denotes the total vertex-weight of $H$ and $W_i:=\sum_{x\in V_i}\weight{x}$ the total vertex-weight of $V_i$, respectively.
\end{lem}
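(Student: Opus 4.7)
The plan is to reduce to a rooted-tree decomposition. Pick any spanning tree $T$ of $H$ (which exists since $H$ is connected) and root it at an arbitrary vertex $\rho$; any subtree of $T$ is connected in $H$, and $T$ inherits the degree bound $\maxdegree{T}\leq \Delta$. For each vertex $x$, write $T[x]$ for the subtree of $T$ hanging below $x$ (including $x$ itself) and $W(T[x])=\sum_{y\in \vertexSet{T[x]}}\weight{y}$ for its total vertex-weight. It therefore suffices to produce subsets with the claimed properties using only edges of $T$.

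I would construct $V_1,\dots,V_r$ iteratively. Set $T_0:=T$, and suppose $T_i$ has already been defined as a rooted subtree of $T$ with root $\rho$. If $W(T_i)<a$, stop. Otherwise, since the root itself satisfies $W(T_i[\rho])=W(T_i)\geq a$, one can pick a vertex $v$ in $T_i$ that is \emph{deepest} (i.e.\ at maximum distance from $\rho$) among those with $W(T_i[v])\geq a$. Let $V_{i+1}:=\vertexSet{T_i[v]}$ and let $T_{i+1}$ be obtained from $T_i$ by deleting all vertices in $V_{i+1}$; since we removed an entire rooted subtree, $T_{i+1}$ is again a rooted subtree of $T$, so the iteration is well-defined and terminates because $\numberVertices{T_i}$ strictly decreases.

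To verify the conclusion: $H[V_{i+1}]\supseteq T_i[v]$ is connected, so the first bullet holds, and the lower bound $W_{i+1}\geq a$ is immediate from the choice of $v$. By the choice of $v$ as deepest, every child $u$ of $v$ in $T_i$ satisfies $W(T_i[u])<a$, and $v$ has at most $\deg_{T_i}(v)\leq \Delta$ children in $T_i$; combined with $\weight{v}\leq M$ this gives
\[
W_{i+1}=\weight{v}+\sum_{u\text{ child of }v}W(T_i[u])\leq M+\Delta\cdot a.
\]
Finally, the remainder $W(H)-\sum_i W_i$ equals $W(T_{\text{final}})$, which is $<a$ by the stopping criterion.

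The only real subtleties are being precise about what ``deepest'' means and observing that after removing a whole rooted subtree the remaining graph is still a rooted subtree of $T$ on which the same procedure applies; once these are in place, the argument is the vertex-weighted analogue of the unit-weight decomposition in \cite[Proposition 4.5]{KrivelevichNachmias2006}, and no further estimation is needed.
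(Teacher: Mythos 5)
Your argument is correct and essentially the same as the paper's: both reduce to a rooted tree (the paper performs a fresh BFS at each inductive step rather than fixing one spanning tree up front), find a vertex whose rooted subtree has weight at least $a$ while every child subtree has weight below $a$, peel off that subtree as $V_i$, and recurse on the connected remainder. The paper selects the vertex minimizing $W(D_z)$ among those with $W(D_z)\geq a$ instead of the deepest such vertex, but both selection rules guarantee that all children's subtrees have weight below $a$, so the bound $a\le W_i\le a\Delta+M$ follows identically.
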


\begin{proof}
We proceed by induction on $\numberVertices{H}$. For $\numberVertices{H}=1$ let $x$ denote the single vertex in $H$. We set $V_1=\left\{x\right\}$ if $\weight{x}\geq a$ and let $r=0$ otherwise.
 
Now assume $\numberVertices{H}>1$. If $W(H)<a$, we set $r=0$. Assume otherwise that $W(H)\geq a$. We perform a breadth-first search (BFS) starting at some arbitrary vertex $y$. For each vertex $x\in\vertexSet{H}$ let $D_x$ be the set of vertices consisting of $x$ and all descendants of $x$ in the BFS-tree and let $W(x):=\sum_{u\in D_x}\weight{u}$ be the total vertex-weight of $D_x$. Let $z\in \vertexSet{H}$ be a vertex such that $W(z)$ is minimal among all vertices $x$ with $W(x)\geq a$. We note that such a vertex exists, as $W(y)=W(H)\geq a$. Let $z_1, \ldots, z_k$ be the neighbours of $z$ that are contained in $D_z$. By minimality of $z$ and the fact $W(z_i)<W(z)$ we have $W(z_i)<a$ for all $i\in[k]$. Thus, we get $W(z)=\sum_{i=1}^{k}\weight{z_i}+\weight{z}\leq a\Delta +M$. Therefore, we choose $V_1=D_z$. By construction $H\left[V_1\right]$ is connected and we have $a\leq W_1\leq a \Delta+M$. Furthermore, the graph obtained from $H$ by deleting $V_1$ is connected and has less vertices than $H$. Hence, we can apply the induction hypothesis to this graph to obtain the remaining sets $V_2, \ldots, V_r$ of our desired decomposition.
\end{proof}

We will show that if we have a \lq suitable\rq\ decomposition of $\pro{n}{t}$, we cannot add too \lq many\rq\ further edges without creating a minor of the complete graph $K_\ell$ for an appropriate $\ell\in\N$. To that end, we will use the following lemma, which is a \lq weighted\rq\ version of the well-known Tur{\'a}n's theorem.
\begin{lem}\label{lem:weighted_turan}
For fixed $n\in\N$ we assign a vertex-weight $\weight{x}>0$ to each vertex $x$ of $K_n$. Furthermore, we define the edge-weight $\weight{xy}:=\weight{x}\weight{y}$ for each edge $xy\in\edgeSet{K_n}$. Let $S_n:=\sum_{x\in\vertexSet{K_n}}\weight{x}$ be the total vertex-weight of $K_n$ and $M_n:=\max_{x\in\vertexSet{K_n}}\weight{x}$ be the maximum vertex-weight of $K_n$. For each subgraph $H\subseteq K_n$ denote by $\weight{H}:=\sum_{e\in \edgeSet{H}}\weight{e}$ the total edge-weight of $H$. Then for each $\ell\geq 2$ we have 
\begin{align*}
\weight{K_n}-\max\setbuilder{\weight{H}}{H\subseteq K_n, K_\ell\nsubseteq H}\geq \frac{S_n}{2}\left(\frac{S_n}{\ell-1}-M_n\right).
\end{align*}
\end{lem}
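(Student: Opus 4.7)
The plan is to reduce the inequality to a weighted form of Tur\'an's theorem. Concretely, I would first establish that every $K_\ell$-free subgraph $H \subseteq K_n$ satisfies
\begin{equation*}
w(H) \;\leq\; \frac{\ell-2}{2(\ell-1)}\, S_n^{\,2}. \qquad (\ast)
\end{equation*}
Granting $(\ast)$, the conclusion follows quickly: from the identity $w(K_n) = \tfrac{1}{2}\bigl(S_n^{\,2} - \sum_{x \in V(K_n)} w(x)^2\bigr)$ together with the trivial estimate $\sum_x w(x)^2 \leq M_n \sum_x w(x) = M_n S_n$, we obtain
\begin{equation*}
w(K_n) - w(H) \;\geq\; \frac{S_n^{\,2} - M_n S_n}{2} - \frac{\ell-2}{2(\ell-1)}\, S_n^{\,2} \;=\; \frac{S_n}{2}\left(\frac{S_n}{\ell-1} - M_n\right),
\end{equation*}
which is exactly the desired lower bound.

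To prove $(\ast)$ I would apply a Zykov-style symmetrization on the weights. Fix $H$ and regard $w$ as a non-negative vector on $V(K_n)$ with $\sum_x w(x) = S_n$; write $d_w(x) := \sum_{y \in N_H(x)} w(y)$. Suppose two non-adjacent vertices $u, v$ of $H$ both carry positive weight. A direct edge-by-edge computation, using crucially that $uv \notin E(H)$ so that no cross term $w(u) w(v)$ is destroyed, shows that replacing $(w(u), w(v))$ by $(0, w(u)+w(v))$ changes $w(H)$ by exactly $w(u)\bigl(d_w(v) - d_w(u)\bigr)$. Choosing $u$ to be the vertex of smaller weighted degree renders this non-negative and strictly decreases the support of $w$. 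Iterating, we may assume $w$ is supported on a clique $C$ of $H$, so $|C| \leq \ell - 1$. On $C$ all pairs are edges, so $w(H) = \tfrac{1}{2}\bigl(S_n^{\,2} - \sum_{x \in C} w(x)^2\bigr)$, and Cauchy--Schwarz gives $\sum_{x \in C} w(x)^2 \geq S_n^{\,2}/|C|$; hence $w(H) \leq \tfrac{|C|-1}{2|C|}\, S_n^{\,2}$, which is at most $\tfrac{\ell-2}{2(\ell-1)}\, S_n^{\,2}$ because the function $k \mapsto (k-1)/(2k)$ is increasing in $k$.

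I do not foresee any serious obstacle. The only delicate point is the monotonicity identity in the symmetrization step, which depends on the simple but essential fact that $uv \notin E(H)$ forces the edge-sets incident to $u$ and to $v$ to be disjoint; once that observation is made, the rest is a standard convexity maximization on the clique together with the bookkeeping inequality $\sum_x w(x)^2 \leq M_n S_n$.
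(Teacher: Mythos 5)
Your proof is correct, and the concluding algebra (the identity $w(K_n)=\tfrac12\bigl(S_n^2-\sum_x w(x)^2\bigr)$, the bound $\sum_x w(x)^2\le M_nS_n$, and the arithmetic that turns $(\ast)$ into the stated inequality) is exactly what the paper does. The one place you diverge is in how the weighted Tur\'an bound $(\ast)$ is obtained. The paper cites a result of Bennett, English, and Talanda-Fisher which says that the extremal $K_\ell$-free subgraph has edge-weight $\tfrac12\max_{\mathcal{Q}}\sum_{Q\ne Q'}W(Q)W(Q')$ over partitions $\mathcal{Q}$ of $V(K_n)$ into $\ell-1$ parts, and then applies the AM--QM inequality to the part-weights; this yields precisely $(\ast)$, since $\tfrac12\bigl(1-\tfrac{1}{\ell-1}\bigr)=\tfrac{\ell-2}{2(\ell-1)}$. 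You instead prove $(\ast)$ from scratch by Zykov-style (Motzkin--Straus) symmetrization: shifting weight between non-adjacent vertices never decreases $w(H)$, which concentrates the weight on a clique of size at most $\ell-1$, after which Cauchy--Schwarz finishes. Your symmetrization step is valid --- since $uv\notin E(H)$ and only $u,v$ change weight, the change in $w(H)$ is exactly $w(u)\bigl(d_w(v)-d_w(u)\bigr)$, as you claim. What the paper's route buys is brevity (the heavy lifting is outsourced to a citation, and the cited result is in fact an exact characterization of the extremizer); what your route buys is a self-contained and more elementary argument that only uses the upper bound actually needed. Both are fine.
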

\begin{proof}
To ease notation, let $A_n:=\max\setbuilder{\weight{H}}{H\subseteq K_n, K_\ell\nsubseteq H}$. In \cite{BennettEnglishTalanda-Fisher2019} Bennett, English, and Talanda-Fisher showed that
\begin{align*}
A_n=\frac{1}{2} \cdot\max_{\mathcal{Q}}\sum_{Q\neq Q'\in\mathcal{Q}}W(Q)W\left(Q'\right),
\end{align*}
where the maximum is taken over all partitions $\mathcal{Q}$ of $\vertexSet{K_n}$ into $\ell-1$ parts and $W(Q):=\sum_{x\in Q}\weight{x}$ denotes the total vertex-weight of $Q\in\mathcal{Q}$. Now let $\mathcal{Q}^\ast$ be some partition for which the maximum is attained. Note that $\sum_{Q\in\mathcal{Q}^\ast}W(Q)=\sum_{x\in\vertexSet{K_n}}\weight{x}=S_n$. Furthermore, we have
\begin{align*}
A_n~=~\frac{1}{2}\sum_{Q\neq Q'\in\mathcal{Q}^\ast}W(Q)W\left(Q'\right)~=~\frac{1}{2} \left(\sum_{Q\in\mathcal{Q}^\ast}W(Q)\right)^2-\frac{1}{2}\sum_{Q\in\mathcal{Q}^\ast}W(Q)^2~=~\frac{S_n^2}{2}-\frac{1}{2}\sum_{Q\in\mathcal{Q}^\ast}W(Q)^2.
\end{align*}
Using the \lq AM-QM inequality\rq, in other words, $\sum_{Q\in\mathcal{Q}^\ast}W(Q)^2\geq \frac{1}{\ell-1}\left(\sum_{Q\in\mathcal{Q}^\ast}W(Q)\right)^2$, we obtain
\begin{align}\label{eq:2}
A_n~\leq~\frac{S_n^2}{2}-\frac{1}{2}\cdot \frac{1}{\ell-1}\left(\sum_{Q\in\mathcal{Q}^\ast}W(Q)\right)^2~=~ \frac{S_n^2}{2}\left(1-\frac{1}{\ell-1}\right).
\end{align}
Finally, the total edge-weight of $K_n$ satisfies
\begin{align*}
	\weight{K_n}~=~\sum_{e\in\edgeSet{K_n}}\weight{e}~=~\frac{1}{2}\sum_{x\neq y\in\vertexSet{K_n}}\weight{x}\weight{y}~=~\frac{S_n^2}{2}-\frac{1}{2}\sum_{x\in\vertexSet{K_n}}\weight{x}^2~\geq~ \frac{S_n^2}{2}-\frac{S_nM_n}{2}.
\end{align*}
This together with \eqref{eq:2} implies the statement.
\end{proof}

\section{Rejected edges: proof of \Cref{thm:main}}\label{sec:proof_main}
The upper bounds follow immediately from \Cref{lem:upperBound_excess}.

\subsection{Proof of upper bounds}\label{subsec:proofUpper}
Due to \cite{Britikov1989,JansonKnuthLuczakPittel1993} and \Cref{thm:er}\ref{thm:erE} and \ref{thm:erInt}\ref{thm:erIntB} we have that \whp
\begin{align*}
	\ex{\er{n}{t}}=
	\begin{cases}
		0 & \text{if} ~~ t=n/2-s ~~\text{for}~~ s\gg n^{2/3};
		\\
		\bigo{h}& \text{if} ~~ t=n/2+s ~~\text{for}~~ s=\bigo{n^{2/3}};
		\\
		\Th{s^3/n^2}& \text{if} ~~ t=n/2+s ~~\text{for}~~ n^{2/3}\ll s\ll n;
		\\
		\left(c-\func{c}+\smallo{1}\right)n/2& \text{if} ~~ t=cn/2 ~~\text{for}~~ c>1.
	\end{cases}
\end{align*}
Together with \Cref{lem:upperBound_excess} this implies the upper bounds in \Cref{thm:main}. 

Before proving the lower bounds we first sketch the main ideas.

\subsection{Proof idea for lower bounds}\label{subsec:proofidea}
First we consider the case $t=n/2+s$ for $n^{2/3}\ll s\ll n$. We use a consequence of the properties \ref{def:classA} and \ref{def:classD} in \Cref{def:class} that there exists an $\ell\in\N$ such that no graph in $\cl$ contains the complete graph $K_\ell$ as a minor. We then apply a \lq sprinkling\rq\ type argument: Let $P=\pro{n}{t}$ and let $P'=\pro{n}{t'}$ for $t'=n/2+s/2$. We first reveal the edges $e_1, \ldots, e_{t'}$. Given the realisation of $P'$, we split the vertex set $\vertexSet{\largestcomponent{P'}}$ of the largest component of $P'$ into disjoint sets $V_1, \ldots, V_r$ of \lq almost\rq\ equal sizes such that $P'\left[V_i\right]$ is connected for each $i\in[r]$, where $r\geq \ell$. Next we reveal the remaining edges $e_{t'+1}, \ldots, e_t$ up to step $t$ and show that for each pair $i\neq j$ there are \lq many\rq\ edges between $V_i$ and $V_j$ which are queried up to step $t$. As $K_\ell$ is not a minor of $P'$, there are some pairs for which all of these edge are rejected. This provides a lower bound on the number of rejected edges. The precise way of decomposing the largest component of $P'$ differs in the cases $s\ll n^{3/4}$ and $s\gg n^{17/24}$. We note that it is sufficient to deal with these two cases, because the general case $n^{2/3}\ll s\ll n$ follows by considering appropriate subsequences.

The starting point for the case $t=cn/2$ for $c>1$ is \Cref{thm:small_classes}. Roughly speaking, it says that only a very small number of all graphs on $n$ vertices lie in $\cl(n)$. Using that we show that for each $\delta>0$, \whp\ there is no graph $H\in\cl\left(n, \left(1+\delta\right)n\right)$ such that all edges of $H$ are already queried before step $\bar{t}:=n \cdot \log n$. In particular, this shows that \whp\ $\numberEdges{\pro{n}{\bar{t}}}\leq \left(1+\smallo{1}\right)n$. It is well known that \whp\ $\er{n}{\bar{t}}$ and therefore also $\pro{n}{\bar{t}}$ are connected. Thus, we obtain that \whp\ $\ex{\pro{n}{t}}\leq \ex{\pro{n}{\bar{t}}}=\smallo{n}$. Furthermore, we have that
\begin{align*}
t-\numberEdges{\pro{n}{t}}=\numberEdges{\er{n}{t}}-\numberEdges{\pro{n}{t}}\geq \ex{\er{n}{t}}-\ex{\pro{n}{t}}.
\end{align*}
Hence, we get a lower bound by using \whp\ $\ex{\pro{n}{t}}=\smallo{n}$ and $\ex{\er{n}{t}}=\left(c-\func{c}+\smallo{1}\right)n/2$ from \Cref{thm:erInt}\ref{thm:erIntB}.

\subsection{Proof of lower bounds}\label{subsec:proofLower}
(i) We start with the case $t=n/2+s$ for $n^{2/3}\ll s\ll n$. 

Take $t'=n/2+s/2$ and let $P'=\pro{n}{t'}$, $G'=\er{n}{t'}$, $P=\pro{n}{t}$, and $G=\er{n}{t}$. Furthermore, let $\largestcomponent{P'}$ and $\largestcomponent{G'}$ be the largest components of $P'$ and $G'$, respectively. 

Due to the properties \ref{def:classA} and \ref{def:classD} in \Cref{def:class}, there exists an $\ell\in\N$ such that there is no graph in $\cl$ having the complete graph $K_\ell$ as a minor. Now we distinguish two cases.

\textit{Case 1: $s\ll n^{3/4}$.}
First reveal the edges $e_1, \ldots, e_{t'}$.

Let $C=\core{\largestcomponent{G'}}$ be the 2-core of $\largestcomponent{G'}$ and $\forest{G'}$ the forest obtained from $\largestcomponent{G'}$ by deleting the edges of $C$. Moreover, for a vertex $x\in \vertexSet{C}$ let $T_x$ be the tree component of $\forest{G'}$ containing $x$. By \Cref{def:class}\ref{def:classB} and \ref{def:classE} we have $\vertexSet{\largestcomponent{P'}}=\vertexSet{\largestcomponent{G'}}$ and $\edgeSet{\largestcomponent{P'}}\subseteq\edgeSet{\largestcomponent{G'}}$; furthermore, each edge of $\forest{G'}$ is also contained in $\largestcomponent{P'}$. Thus, there is a connected and spanning subgraph $C'\subseteq C$ such that $\largestcomponent{P'}$ can be obtained by replacing each vertex $x$ in $C'$ by the tree $T_x$. 

We apply \Cref{lem:decomposition} to $C'$, where we define the vertex-weight of a vertex $x\in\vertexSet{C'}$ by $\weight{x}:=\left|\vertexSet{T_x}\right|$. Then due to \Cref{thm:er}\ref{thm:erA}, \ref{thm:erB}, and \ref{thm:erD}, \whp\ the total vertex-weight of $C'$ satisfies
\begin{align*}
W\left(C'\right):=\sum_{x\in\vertexSet{C'}}\weight{x}=\sum_{x\in\vertexSet{C'}}\left|\vertexSet{T_x}\right|=\numberVertices{\largestcomponent{P'}}=\numberVertices{\largestcomponent{G'}}=(2+\smallo{1})s,
\end{align*}
the maximum vertex-weight of $C'$ satisfies
\begin{align*}
\max_{x\in\vertexSet{C'}}\weight{x}=\max_{x\in\vertexSet{C'}}\left|\vertexSet{T_x}\right|=\smallo{s},
\end{align*}
and the maximum degree of $C'$ is bounded by $\maxdegree{C'}\leq \maxdegree{C}=3$. Assuming this \whp\ event holds, we apply \Cref{lem:decomposition} to $C'$ with $\Delta=3$ and $a=M=s/(3\ell)$ and obtain disjoint vertex sets $\tilde{V}_1, \ldots, \tilde{V}_r\subseteq \vertexSet{C'}$ such that
\begin{itemize}
\item
$C'\left[\tilde{V}_i\right]$ is connected for each $i\in[r]$;
\item
$a=s/(3\ell)\leq W_i\leq a\Delta+M=4s/(3\ell)$ for each $i\in[r]$;
\item
$W\left(C'\right)-\sum_{i=1}^{r}W_i<a=s/(3\ell)$,
\end{itemize}
where $W_i:=\sum_{x\in \tilde{V}_i} \weight{x}=\sum_{x\in\tilde{V}_i}\left|\vertexSet{T_x}\right|$. 

For $i\in[r]$ let $V_i$ be the set of vertices that lie in some $T_x$ for a $x\in\tilde{V}_i$. Then $V_1, \ldots, V_r\subseteq\largestcomponent{P'}$ are pairwise disjoint and satisfy the following properties: 
\begin{itemize}
\item
$P'\left[V_i\right]$ is connected for each $i\in[r]$;
\item
$s/(3\ell)\leq |V_i|\leq 4s/(3\ell)$ for each $i\in[r]$;
\item
$\sum_{i=1}^{r}|V_i|\geq \numberVertices{\largestcomponent{P'}}-s/(3\ell)\geq 4s/3$.
\end{itemize}
Note that $\ell\leq r=\Th{1}$. 
Next we reveal the edges $e_{t'+1}, \ldots, e_t$. We claim that \whp\ for each pair $i\neq j\in[r]$ there are at least $A:=s^3/\left(18\ell^2 n^2\right)$ many edges between points in $V_i$ and $V_j$ which have been queried up to step $t$. Assume that for some $i\neq j\in[r]$ this is not the case and let $k\in\left[N\right]$ be such that $t'<k\leq t$. Then in the step of revealing $e_k$ there were at least $|V_i||V_j|-A\geq s^2/\left(10\ell^2\right)$ many edges going between $V_i$ and $V_j$ which had not been queried yet. Hence, we have 
\begin{align*}
\prob{e_k \text{ has one endpoint in }V_i \text{ and one in }V_j}\geq s^2/\left(5\ell^2n^2\right)=:p.
\end{align*}
Letting $X$ be the number of edges going between $V_i$ and $V_j$ that have been queried up to step $t$ it implies
\begin{align}\label{eq:1}
\prob{X\leq A}\leq \prob{\bin{s/2}{p}\leq A}=\bigo{n^2/s^3}=\smallo{1}.
\end{align}
As $r=\Th{1}$ this shows the claim. By the choice of $\ell$ we know that $K_\ell$ is not a minor of $P=\pro{n}{t}$. Hence, there is a pair $i\neq j\in[r]$ such that there is no edge in $P$ going between $V_i$ and $V_j$. Together with the claim this yields that \whp\ at least $A=\Th{s^3/n^2}$ many edges have been rejected up to step $t$. This concludes the case $s\ll n^{3/4}$.

\textit{Case 2: $s\gg n^{17/24}$.}
First we reveal again only the edges $e_1, \ldots, e_{t'}$.

Using \Cref{thm:er}\ref{thm:erA} and \ref{thm:erC} we have that \whp\ $\numberVertices{\largestcomponent{P'}}=\numberVertices{\largestcomponent{G'}}=\left(2+\smallo{1}\right)s$ and $\maxdegree{\largestcomponent{P'}}\leq \maxdegree{G'}=\left(1+\smallo{1}\right)\log n/\log \log n$. Assuming this \whp\ event holds, we apply \Cref{lem:decomposition} to $\largestcomponent{P'}$ where we assign a vertex-weight $\weight{x}=1$ to each $x\in\vertexSet{\largestcomponent{P'}}$, $\Delta=\log n$, $M=1$, and $a=s/(\ell \log n)$. This leads to disjoint sets $V_1, \ldots, V_r \subseteq \vertexSet{\largestcomponent{P'}}$ such that $\largestcomponent{P'}\left[V_i\right]$ is connected for each $i\in[r]$, 
\begin{align}
s/(\ell \log n)&\leq |V_i|\leq s/\ell+1 \quad \text{for each }i\in[r], \quad \text{and}\label{eq:3}\\
\sum_{i\in[r]}|V_i|&\geq \largestcomponent{P'}-s/(\ell \log n)=(2+\smallo{1})s. \label{eq:4}
\end{align}

Next reveal the remaining edges $e_{t'+1}, \ldots, e_t$. We claim that \whp\ for each pair $i\neq j\in[r]$ there are at least $B=B(i,j):=|V_i||V_j| s/\left(2n^2\right)$ edges that have been queried up to step $t$. To prove the claim, let $i\neq j\in[r]$ be fixed and denote by $X$ the number of edges between $V_i$ and $V_j$ that have been queried up to step $t$. Analogous to \eqref{eq:1} we obtain, with $q:=3|V_i||V_j|/\left(2n^2\right)$,
\begin{align*}
\prob{X\leq B}\leq \prob{\bin{s/2}{q}\leq B}=\bigo{\frac{n^2}{|V_i||V_j|s}}=\bigo{\frac{n^2 \log n^2}{s^3}}=\smallo{n^{-1/9}},
\end{align*}
where we used Chebyshev's inequality, \eqref{eq:3}, and $s\gg n^{17/24}$. As $r=\bigo{\log n}$, the claim follows by the union bound. Next, let $H$ be the graph with (super)vertex set $\vertexSet{H}=\left\{V_1, \ldots, V_r\right\}$ and two vertices $V_i$ and $V_j$ are connected if and only if there is an edge in $P$ going between $V_i$ and $V_j$. We assign each vertex $V_i$ in $H$ the vertex-weight $w(V_i):=|V_i|$. Due to \eqref{eq:3} and \eqref{eq:4} we have that the maximum vertex-weight and the total vertex-weight of $\vertexSet{H}$ satisfy 
\begin{align*}
M_r:=\max_{i\in[r]}\weight{V_i}\leq s/\ell+1 \quad \text{ and } \quad S_r:=\sum_{i\in[r]}\weight{V_i}=(2+\smallo{1})s.
\end{align*}
Let $I$ be the set of unordered pairs $i\neq j$ such that there is no edge in $H$ going between $V_i$ and $V_j$. We note that $K_\ell\nsubseteq H$, as $K_\ell$ is not a minor of $P$. Then by \Cref{lem:weighted_turan} we obtain
\begin{align*}
\sum_{\left\{i,j\right\}\in I}|V_i||V_j|\geq \frac{S_r}{2}\left(\frac{S_r}{\ell-1}-M_r\right)=\Th{s^2}.
\end{align*}
By definition of $I$ there is no edge in $P$ going between $V_i$ and $V_j$ for each unordered pair $\left\{i,j\right\}\in I$. Furthermore, \whp\ for each of these pairs at least $B=|V_i||V_j| s/\left(2n^2\right)$ many edges between $V_i$ and $V_j$ have been queried up to step $t$. Thus, the number of rejected edges satisfies that \whp 
\begin{align*}
\rej{t}\geq \frac{s}{2n^2} \sum_{\left\{i,j\right\}\in I}|V_i||V_j|\geq \Th{s^3/n^2}.
\end{align*}
This concludes the case $s\gg n^{17/24}$ and therefore also the case $t=n/2+s$ for $n^{2/3}\ll s\ll n$.

(ii) We consider the case where $n=cn/2$ for $c>1$. Let $\bar{t}:=n \cdot \log n$, $\bar{P}:=\pro{n}{\bar{t}}$, and $P:=\pro{n}{t}$. We claim that \whp\
\begin{align}\label{eq:5}
	\numberEdges{\bar{P}}\leq \left(1+\smallo{1}\right)n.
\end{align}
To show the claim, we use an idea from \cite{GerkeSchlatterStegerTaraz2008}: Let $\delta>0$, $m:=(1+\delta)n$, $H\in\cl(n,m)$, $\edgeSet{H}=\left\{f_1, \ldots, f_m\right\}$, and as in \Cref{subsec:motivation} $N:=\binom{n}{2}$. We have
\begin{align*}
\prob{H\subseteq\bar{P}}
~\leq~\prob{\left\{f_1, \ldots, f_m\right\}\subseteq\left\{e_1, \ldots, e_{\bar{t}}\right\}}
~\leq~\prod_{i=1}^{m}\prob{f_i\in \left\{e_1, \ldots, e_{\bar{t}}\right\}}
~=~\left(\frac{\bar{t}}{N}\right)^m
~=~\left(\frac{2\log n}{n-1}\right)^m.
\end{align*}
By \Cref{thm:small_classes} there exists a $c>0$ such that $\left|\cl(n)\right|\leq n!c^n$ for all $n\in\N$. Thus, we obtain by taking the union bound
\begin{align*}
\prob{\numberEdges{\bar{P}}\geq m}&~\leq~ \left|\cl(n)\right|\left(\frac{2\log n}{n-1}\right)^m
~\leq~
n!c^n\left(\frac{2\log n}{n-1}\right)^m
\\
&~\leq~
n^nc^n\left(\frac{2\log n}{n-1}\right)^m
~=~
\left(\frac{nc\left(2\log n\right)^{1+\delta}}{\left(n-1\right)^{1+\delta}}\right)^n
~=~\smallo{1},
\end{align*}
which gives \eqref{eq:5}. Next we use the well-known fact that \whp\ $\er{n}{\bar{t}}$ is connected (see e.g., \cite{ErdoesRenyi1959}). By \Cref{rem:component_structure} this is also true for $\bar{P}$. Together with \eqref{eq:5} this implies that \whp\ $\ex{\bar{P}}=\smallo{n}$. As $P\subseteq\bar{P}$, we obtain \whp\ $\ex{P}=\smallo{n}$. Due to $P\subseteq G$ we have $\numberEdges{G}-\numberEdges{P}\geq \ex{G}-\ex{P}$. Hence, we have that \whp\
\begin{align*}
t-\numberEdges{P}
~=~\numberEdges{G}-\numberEdges{P}
~\geq~\ex{G}-\ex{P}
~=~\left(c-\func{c}+\smallo{1}\right)n/2-\smallo{n},
\end{align*}
where we used \Cref{thm:erInt}\ref{thm:erIntB} for the last equality. This shows the lower bound in the case $t=cn/2$ for $c>1$ and concludes the proof of \Cref{thm:main}.
\qed

\section{Addable and forbidden edges and proof of \Cref{coro:main}}\label{sec:proof_coro}
How likely is it that the \lq next\rq\ edge $e_{t+1}$ gets accepted? Equivalently, what is the number of potential edges that can be added to $\pro{n}{t}$ without violating property $\cl$?

\begin{definition}
Let $\cl$ be a class of graphs and let $H\in \cl(n)$. Then we call an edge $e\in \edgeSet{K_n}\setminus \edgeSet{H}$ \textit{addable} to $H$ if $H+e\in \cl$ and \textit{forbidden} in $H$ otherwise, i.e., if $H+e\notin \cl$. Furthermore, we set
\begin{align*}
	\add{H}&:=\left|\setbuilder{e\in \edgeSet{K_n}\setminus \edgeSet{H}}{e \text{ is addable to }H}\right|;\\
	\forb{H}&:=\left|\setbuilder{e\in \edgeSet{K_n}\setminus \edgeSet{H}}{e \text{ is forbidden in } H}\right|.
\end{align*}
\end{definition}

In the next theorem we determine the number of forbidden edges in $\pro{n}{t}$. Combining it with \Cref{thm:main} one can also compute $\add{\pro{n}{t}}$, because
\begin{align*}
\add{\pro{n}{t}}=N-\forb{\pro{n}{t}}-\numberEdges{\pro{n}{t}}.
\end{align*}

\begin{thm}\label{thm:forbidden}
Let $\cl$ be a class of graphs satisfying the properties \ref{def:classA}--\ref{def:classF} in \Cref{def:class} and $\left(\pro{n}{t}\right)_{t=0}^{N}$ be the $\cl$-constrained random graph process. Let $h=h(n)=\smallomega{1}$ be a function which tends to $\infty$ arbitrarily slowly as $n\to\infty$. Let $t=t(n)\in\left[N\right]$ and $s=s(n)$. Then \whp
	\begin{align*}
	\forb{\pro{n}{t}}=
	\begin{cases}
		\bigo{hn^2/s} & \text{if} ~~ t=n/2-s ~~\text{for}~~ s\gg n^{2/3};
		\\
		\bigo{hn^{4/3}}& \text{if} ~~ t=n/2+s ~~\text{for}~~ s=\bigo{n^{2/3}};
		\\
		\Th{s^2}& \text{if} ~~ t=n/2+s ~~\text{for}~~ n^{2/3}\ll s\ll n;
		\\
		\left(\sol{c}^2+\smallo{1}\right)n^2/2& \text{if} ~~ t=cn/2 ~~\text{for}~~ c>1.
	\end{cases}
\end{align*}
\end{thm}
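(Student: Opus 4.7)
The plan is threefold. First I reduce the count of forbidden edges to a sum over non-tree components of $\pro{n}{t}$: by properties \ref{def:classE} and \ref{def:classF} of \Cref{def:class}, any edge that either joins two components of $\pro{n}{t}$ or lies inside a tree component of $\pro{n}{t}$ is addable, so every forbidden edge sits inside a single non-tree component, giving
\[
\forb{\pro{n}{t}}\le \sum_{C}\binom{\numberVertices{C}}{2},
\]
where $C$ ranges over the non-tree components of $\pro{n}{t}$. Since $\pro{n}{t}\subseteq \er{n}{t}$ and the two graphs share the same vertex partition into components by \Cref{rem:component_structure}, every tree component of $\er{n}{t}$ is a tree component of $\pro{n}{t}$, so the right-hand side is in turn bounded by the analogous sum over non-tree components of $\er{n}{t}$; the upper bounds thereby reduce to known statements about the component structure of the \ER\ graph.

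Second, I extract the four upper bounds from this sum. For $t=n/2-s$ with $s\gg n^{2/3}$, \cite{Britikov1989,JansonKnuthLuczakPittel1993} give $\ex{\er{n}{t}}=0$ \whp\ so $\forb{\pro{n}{t}}=0$. For $t=n/2+s$ with $s=\bigo{n^{2/3}}$, the components of $\er{n}{t}$ have $\bigo{h\cdot n^{2/3}}$ vertices and there are $\bigo{1}$ non-tree components \whp, yielding $\bigo{h^2\cdot n^{4/3}}$, absorbed into $\bigo{h\cdot n^{4/3}}$ by redefining $h$. For $n^{2/3}\ll s\ll n$, \Cref{thm:er}\ref{thm:erA} gives $\numberVertices{\largestcomponent{\er{n}{t}}}=(4+\smallo{1})s$; all other non-tree components contribute $\smallo{s^2}$ since their total size is polylogarithmic in $n$ and their number is at most $\ex{\er{n}{t}}=\Th{s^3/n^2}$. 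For $t=cn/2$ with $c>1$, \Cref{thm:erInt}\ref{thm:erIntA},\ref{thm:erIntC} give $\numberVertices{\largestcomponent{\er{n}{t}}}=(\sol{c}+\smallo{1})n$ with all other components of order $\smallo{n}$, so the sum equals $(1/2+\smallo{1})\sol{c}^2 n^2$.

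Third, for the matching lower bounds in the last two regimes I exploit that $\forbidden$ is monotone: by minor-closure (property \ref{def:classD}), if $H\subseteq H'$ and $e\notin \edgeSet{H'}$ is forbidden in $H$, then $e$ is forbidden in $H'$, so $\forb{\pro{n}{i}}$ is non-decreasing in $i$. Combined with the identity $\condprob{e_{i+1}\text{ is rejected}}{\pro{n}{i},e_1,\ldots,e_i}=\forb{\pro{n}{i}}/(N-i)$ and monotonicity applied on the right, this gives, for any $t_0<t$,
\[
\expec{\rej{t}-\rej{t_0}}\le (t-t_0)\cdot \expec{\forb{\pro{n}{t}}}/(N-t).
\]
For $t=cn/2$ with $c>1$ I take $t_0=t-\delta n$ for small $\delta>0$; implicit differentiation of $1-\sol{c}=e^{-c\sol{c}}$ gives $f'(c)=1-\sol{c}^2$, so \Cref{thm:main} evaluated at both $t$ and $t_0$ produces $\expec{\rej{t}-\rej{t_0}}=(\delta\sol{c}^2+\bigo{\delta^2}+\smallo{1})n$, and rearranging followed by $n\to\infty$ and then $\delta\to 0$ delivers $\expec{\forb{\pro{n}{t}}}\ge(\sol{c}^2-\smallo{1})n^2/2$. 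The same device with $t_0=n/2+s/2$ together with the $\rej{t}=\Th{s^3/n^2}$ asymptotics from \Cref{thm:main} yields $\expec{\forb{\pro{n}{t}}}=\Omega(s^2)$ in the third regime. Pairing these in-expectation lower bounds with the \whp\ upper bounds already proved forces the matching \whp\ lower bounds as well, since any sizeable probability mass below the claimed value would push the mean below its derived lower bound. The main obstacle will be this expectation-to-\whp\ passage in the linear regime, where the bounds coincide up to $\smallo{n^2}$: it requires uniformity of the $\smallo{1}$-error in \Cref{thm:main} across the sliding interval $[c-2\delta,c]$ and control of the low-probability failure event via the trivial bound $\forb{\pro{n}{t}}\le\binom{n}{2}$ times a vanishing probability.
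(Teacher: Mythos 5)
Your proposal takes a genuinely different route from the paper for both directions: the paper proves a single transfer lemma (\Cref{lem:rejected}) that converts whp bounds on the rejection rate $\rej{t_2}-\rej{t_1}$ into whp bounds on $\forb{\pro{n}{t}}$, and then feeds in \Cref{thm:main} for both the upper and lower bounds; you instead get upper bounds from the clean structural observation $\forb{\pro{n}{t}}\le\sum_{C}\binom{\numberVertices{C}}{2}$ over non-tree components (and the fact that non-tree components of $\pro{n}{t}$ correspond to non-tree components of $\er{n}{t}$, which is correct), and get lower bounds through an expectation identity. Your Part~1 reduction is sound, but the proposal has two genuine gaps.

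\textbf{Gap 1 (regime $t=n/2-s$, $s\gg n^{2/3}$).} You write ``$\ex{\er{n}{t}}=0$ whp so $\forb{\pro{n}{t}}=0$.'' This does not follow: $\ex{G}=\sum_{\text{non-tree }C}\bigl(\numberEdges{C}-\numberVertices{C}\bigr)$, so excess $0$ means all non-tree components are \emph{unicyclic}, not that $G$ is a forest. Property \ref{def:classF} only protects edge additions inside a \emph{tree} component; an edge inside a unicyclic component can be forbidden. The paper explicitly remarks after \Cref{thm:forbidden} that $\forb{\pro{n}{t}}=0$ is false for a general $\cl$ in this regime, which your deduction would falsely establish. (Your Part~1 bound applied to the unicyclic components of $\er{n}{t}$, together with subcritical structure results, would in fact yield a correct—even stronger—bound, but the one-line deduction you wrote is wrong.)

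\textbf{Gap 2 (lower bound in the regime $n^{2/3}\ll s\ll n$).} The expectation-to-whp passage you invoke fails here, where the upper and lower bounds only match as $\Theta$'s with unrelated hidden constants. Suppose $\expec{\forb{\pro{n}{t}}}\ge c's^2$ and whp $\forb{\pro{n}{t}}\le Cs^2$ for some $C>c'$. Writing $q:=\prob{\forb{\pro{n}{t}}<cs^2}$ for small $c<c'$ and using the trivial ceiling $\forb\le N=\Theta(n^2)$ gives $c's^2\le cs^2 q+Cs^2(1-q)+N\cdot o(1)$. Since $s^2\ll n^2$, the term $N\cdot o(1)$ alone can swamp the inequality unless you know the tail probability is $o(s^2/n^2)$, which a plain whp statement does not provide; and even discarding that term, you only get $q\le(C-c')/(C-c)+o(1)$, bounded away from $1$ but not $o(1)$. (Your argument \emph{does} go through in the linear regime $t=cn/2$, $c>1$, where the two bounds match up to $o(n^2)=o(N)$ and the trivial-ceiling contribution is $o(n^2)$.) This is precisely why the paper's \Cref{lem:rejected}\ref{lem:rejectedB} is proved as a whp-to-whp conditional statement: on the event $\forb{\pro{n}{t_2}}<(1-\varepsilon)\alpha N$, monotonicity of $\forbidden$ caps the conditional rejection probability at every step $t\in(t_1,t_2]$ by $(1-\varepsilon/2)\alpha$, the number of rejections in that window is stochastically dominated by $\bin{t_2-t_1}{(1-\varepsilon/2)\alpha}$, and Chernoff then contradicts the given whp lower bound on $\rej{t_2}-\rej{t_1}$. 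To close your gap you would essentially have to reprove this lemma.
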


We note that for planar graphs (and many other graph classes mentioned in \Cref{prop:graph_classes}) we actually have that \whp\ 
\begin{align*}
\forb{\pro{n}{t}}=0 \quad \text{if }~ t=n/2-s ~\text{ for }~ s\gg n^{2/3}.
\end{align*}
However, this is not true in general for a class $\cl$ that satisfies the properties \ref{def:classA}--\ref{def:classF} in \Cref{def:class}. In order to prove \Cref{thm:forbidden}, we will use the following lemma. We recall that for fixed $n\in\N$ we denote by $\rej{t}=t-\numberEdges{\pro{n}{t}}$ the number of rejected edges up to step $t$.
\begin{lem}\label{lem:rejected}
Let $\cl$ and $\left(\pro{n}{t}\right)_{t=0}^{N}$ be as in \Cref{thm:forbidden}. Let $t_1=t_1(n), t_2=t_2(n)\in \left[N\right]$ and $\alpha=\alpha(n)>0$ be such that $\alpha \cdot \left(t_2-t_1\right)=\smallomega{1}$ and $t_2=\smallo{\alpha \cdot n^2}$. 
\begin{enumerate}
\item\label{lem:rejectedA}
If \whp\ $\rej{t_2}-\rej{t_1}\leq \alpha\cdot\left(t_2-t_1\right)$, then \whp\ $\forb{\pro{n}{t_1}}\leq \left(1+\smallo{1}\right)\alpha \cdot N$.
\item\label{lem:rejectedB}
If \whp\ $\rej{t_2}-\rej{t_1}\geq \alpha \cdot \left(t_2-t_1\right)$, then \whp\ $\forb{\pro{n}{t_2}}\geq \left(1+\smallo{1}\right)\alpha \cdot N$.
\end{enumerate}
\end{lem}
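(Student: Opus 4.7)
The plan is to analyse the natural rejection martingale. Write $F(t):=\forb{\pro{n}{t}}$ and $U(t):=F(t)-\rej{t}$ for the number of currently unqueried forbidden edges. At step $s+1$ the edge $e_{s+1}$ is uniform among the $N-s$ unqueried edges, so it is rejected precisely when it is one of the $U(s)$ unqueried forbidden ones. Hence, conditional on the history up to step $s$, $\rej{s+1}-\rej{s}$ is Bernoulli with mean $p_s:=U(s)/(N-s)$, and $M_t:=\rej{t}-\sum_{s<t}p_s$ is a martingale with increments bounded by $1$. The deterministic backbone of the argument is that forbidden edges stay forbidden: since $\cl$ is minor-closed (property \ref{def:classD}), if $e\in\forb{\pro{n}{t}}$, then $\pro{n}{t'}+e\supseteq\pro{n}{t}+e\notin\cl$ for every $t'\geq t$, so $e$ is never added later and remains in $\forb{\pro{n}{t'}}$. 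In particular, $F$ is monotone non-decreasing in $t$ and every rejected edge in $(t_1,t_2]$ belongs to $\forb{\pro{n}{t_2}}$.

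For part \ref{lem:rejectedA} I would argue by contradiction. Fix $\varepsilon>0$ and suppose $\probLarge{F(t_1)\geq(1+\varepsilon)\alpha N}$ does not tend to $0$. Since $\rej{t_1}\leq t_1\leq t_2=\smallo{\alpha N}$, on this event $U(t_1)\geq(1+\varepsilon-\smallo{1})\alpha N$. Given the state at time $t_1$, the number $X$ of those unqueried forbidden edges that get queried in the window $(t_1,t_2]$ is hypergeometric, each queried one is rejected, and $\expec{X}\geq(1+\varepsilon-\smallo{1})\alpha(t_2-t_1)$. A Chernoff lower-tail bound for the hypergeometric, together with $\alpha(t_2-t_1)=\smallomega{1}$, gives $X>\alpha(t_2-t_1)$ whp on the bad event, contradicting $\rej{t_2}-\rej{t_1}\geq X$ and the hypothesis $\rej{t_2}-\rej{t_1}\leq\alpha(t_2-t_1)$ whp. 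Letting $\varepsilon\to 0$ along a sufficiently slow sequence yields $F(t_1)\leq(1+\smallo{1})\alpha N$ whp.

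For part \ref{lem:rejectedB} I would use a complementary contradiction via the martingale $M$. Consider $B:=\{F(t_2)\leq(1-\varepsilon)\alpha N\}$. The conclusion is vacuous unless $\alpha\leq 1$, so we assume this, making $t_2=\smallo{N}$. Monotonicity of $F$ and $U\leq F$ give $U(s)\leq(1-\varepsilon)\alpha N$ for every $s\leq t_2$ on $B$, whence $p_s\leq(1-\varepsilon)\alpha N/(N-t_2)\leq(1-\varepsilon/2)\alpha$ throughout the window. Summing, on $B$ the predictable part satisfies $\sum_{s=t_1}^{t_2-1}p_s\leq(1-\varepsilon/2)\alpha(t_2-t_1)$ and the predictable quadratic variation $\sum p_s(1-p_s)\leq\alpha(t_2-t_1)$. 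Therefore, on $B\cap\{\rej{t_2}-\rej{t_1}\geq\alpha(t_2-t_1)\}$ one has $M_{t_2}-M_{t_1}\geq(\varepsilon/2)\alpha(t_2-t_1)$ with quadratic variation at most $\alpha(t_2-t_1)$, and Freedman's martingale Bernstein inequality bounds the probability of this intersection by $\exp\bigl(-\Omega(\varepsilon^2\alpha(t_2-t_1))\bigr)=\smallo{1}$. Combined with the whp rejection lower bound, this forces $\probLarge{B}\to 0$, and letting $\varepsilon\to 0$ yields $F(t_2)\geq(1-\smallo{1})\alpha N$ whp.

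The main obstacle is the concentration step in part \ref{lem:rejectedB}. A plain Azuma--Hoeffding bound on $M_{t_2}-M_{t_1}$ using the increment bound $1$ only yields an exponent of order $\alpha^2(t_2-t_1)$, which need not diverge under the weak hypothesis $\alpha(t_2-t_1)=\smallomega{1}$ when $\alpha$ is tiny. The fix is to exploit the Bernoulli variance $p_s(1-p_s)\leq p_s$ so that the predictable quadratic variation is only $\bigo{\alpha(t_2-t_1)}$ rather than $\bigo{t_2-t_1}$; Freedman's (or Bernstein's) martingale inequality then delivers the sharper exponent $\Omega(\alpha(t_2-t_1))$, which does diverge by hypothesis. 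The deterministic monotonicity coming from minor-closedness is precisely what converts the single pointwise bound on $F(t_2)$ on the bad event into a uniform bound on every $p_s$ in the window, which is what makes the variance estimate available.
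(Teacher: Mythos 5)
Your proof is correct and takes essentially the same route as the paper: both use the monotonicity of $\forb{\pro{n}{t}}$ (from minor-closedness) to reduce to a per-step rejection-probability bound on the bad event and then compare $\rej{t_2}-\rej{t_1}$ against a binomial (or, in your variant, hypergeometric) with a Chernoff/Bernstein tail bound whose exponent is $\Theta(\varepsilon^2\alpha(t_2-t_1))=\smallomega{1}$. Your remark that a naive Azuma bound with exponent $\alpha^2(t_2-t_1)$ would be too weak, and that one must exploit the variance $p_s(1-p_s)\le p_s$ (Freedman/Bernstein), is precisely what the paper's multiplicative Chernoff bound for $\binDistribution\left(t_2-t_1,(1\pm\varepsilon/2)\alpha\right)$ achieves implicitly.
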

\begin{proof}
Due to property \ref{def:classD} of \Cref{def:class}, a forbidden edge in $\pro{n}{t}$ stays forbidden in $\pro{n}{t'}$ for all $t'>t$. Thus, $\forb{\pro{n}{t}}$ is non-decreasing in $t$. Let $\varepsilon>0$ be fixed and we denote by $\mathcal{F}$ the event that $\forb{\pro{n}{t_1}}\geq \left(1+\varepsilon\right)\alpha \cdot N$. Then we have that for each $t\in\left\{t_1+1, \ldots, t_2\right\}$ and $n$ large enough,
\begin{align*}
\condprob{e_t \text{ is rejected}}{\mathcal{F}}\geq \frac{\left(1+\varepsilon\right)\alpha \cdot N-t_2}{N}\geq \left(1+\varepsilon/2\right)\alpha,
\end{align*}
where we used $t_2=\smallo{\alpha \cdot n^2}$ in the last inequality. Hence, we obtain
\begin{align*}
\condprob{\rej{t_2}-\rej{t_1}\leq \alpha \cdot\left(t_2-t_1\right)}{\mathcal{F}}\leq \prob{\bin{t_2-t_1}{\left(1+\varepsilon/2\right)\alpha}\leq \alpha \cdot \left(t_2-t_1\right)}=\smallo{1}.
\end{align*}
Together with the fact that \whp\ $\rej{t_2}-\rej{t_1}\leq \alpha \cdot\left(t_2-t_1\right)$ it implies that $\prob{\mathcal{F}}=\smallo{1}$. As $\varepsilon>0$ was arbitrary, statement \ref{lem:rejectedA} follows. Statement \ref{lem:rejectedB} can be obtained similarly.
\end{proof}

Combining \Cref{thm:main} with \Cref{lem:rejected} we can prove \Cref{thm:forbidden}.
\proofofW{thm:forbidden}
(i) We start with the case $t=n/2+s$ for $n^{2/3}\ll s\ll n$. 

To prove the upper bound, we set $t_2=n/2+2s$. By \Cref{thm:main} we have that \whp\ $\rej{t_2}-\rej{t}=\bigo{s^3/n^2}$. Hence, there exists $\alpha=\alpha(n)$ such that $\alpha=\Th{s^2/n^2}$ and \whp\ $\rej{t_2}-\rej{t}\leq \alpha \cdot \left(t_2-t\right)$. Hence, \Cref{lem:rejected}\ref{lem:rejectedA} yields that \whp\ 
\begin{align*}
\forb{\pro{n}{t}}\leq \left(1+\smallo{1}\right)\alpha\cdot N=\Th{s^2}.
\end{align*}

Similarly, we obtain the lower bound: Taking $t_1=n/2$ we have by \Cref{thm:main} that \whp\ $\rej{t}=\Th{s^3/n^2}$ and $\rej{t_1}=\bigo{h}$ for $h=\smallomega{1}$, and thus $\frac{\rej{t}-\rej{t_1}}{t-t_1}=\Th{s^2/n^2}$. Together with \Cref{lem:rejected}\ref{lem:rejectedB} it implies that \whp\ $\forb{\pro{n}{t}}\geq \Th{s^2}$. 

(ii) The assertions in the cases $t=n/2-s$ for $s\gg n^{2/3}$ and $t=n/2+s$ for $s=\bigo{n^{2/3}}$ can be shown similarly.

(iii) Finally, we consider the regime $t=cn/2$ for $c>1$. Let $c_2>c$ and $t_2=c_2n/2$. By \Cref{thm:main} we have that \whp\
\begin{align*}
\rej{t_2}-\rej{t_1}=\left(c_2-\func{c_2}-c+\func{c}+\smallo{1}\right)n/2.
\end{align*}
Hence, \Cref{lem:rejected}\ref{lem:rejectedA} implies that \whp\ 
\begin{align*}
\forb{\pro{n}{t}}\leq \left(1+\smallo{1}\right)\left(1-\frac{\func{c_2}-\func{c}}{c_2-c}\right)n^2/2.
\end{align*}
With $c_2\downarrow c$ we obtain that \whp
\begin{align*}
\forb{\pro{n}{t}}\leq\left(1+\smallo{1}\right)\left(1-f'(c)\right)n^2/2=\left(1+\smallo{1}\right)\sol{c}^2n^2/2,
\end{align*}
where we used \Cref{lem:function}\ref{lem:functionA} in the last equality. Using \Cref{lem:rejected}\ref{lem:rejectedB} we obtain in a similar way that \whp\ $\forb{\pro{n}{t}}\geq\left(1+\smallo{1}\right)\sol{c}^2n^2/2$. This completes the proof.
\qed

We conclude this section by showing \Cref{coro:main}.
\proofofW{coro:main}
Let $C_1, \ldots, C_r$ be the components of $\pro{n}{t}$ such that $\numberVertices{C_1}\geq \ldots \geq \numberVertices{C_r}$. By \Cref{thm:erInt}\ref{thm:erIntA} and \ref{thm:erIntC} and \Cref{rem:component_structure} we have \whp\ $\numberVertices{C_1}=\left(\sol{c}+\smallo{1}\right)n$ and $\numberVertices{C_2}=\smallo{n}$. Furthermore, let $E_1\subseteq \edgeSet{K_n}\setminus\edgeSet{\pro{n}{t}}$ be the subset of edges with both endpoints in $\Largestcomponent$ and $E_2$ the remaining edges of $\edgeSet{K_n}\setminus\edgeSet{\pro{n}{t}}$. We have that \whp\
\begin{align}
\left|E_1\right|&=\left(\sol{c}^2+\smallo{1}\right)n^2/2 \label{eq:7};\\
\left|E_2\right|&=\left(1-\sol{c}^2+\smallo{1}\right)n^2/2. \label{eq:8}
\end{align}
Due to property \ref{def:classE} of \Cref{def:class} the two endpoints of a forbidden edge lie in the same component. Hence, the number of forbidden edges in $E_2$ is \whp\ at most
\begin{align*}
\numberVertices{C_2}^2+\ldots+\numberVertices{C_r}^2\leq \numberVertices{C_2}\left(\numberVertices{C_2}+\ldots+\numberVertices{C_r}\right)\leq \numberVertices{C_2}n=\smallo{n^2}.
\end{align*}
Together with \eqref{eq:8} it shows assertion \ref{coro:mainB}. Furthermore, it implies that the number of forbidden edges in $E_1$ is \whp\
\begin{align*}
\forb{\pro{n}{t}}-\smallo{n^2}=\left(\sol{c}^2+\smallo{1}\right)n^2/2.
\end{align*} 
Combining this with \eqref{eq:7} yields statement \ref{coro:mainA}. \qed

\section{The random graph $\acc{n}{m_0}$: proof of \Cref{thm:largestComponent}}\label{sec:accepted_graph}
Throughout this section, let $\cl$ be a class of graphs satisfying the properties \ref{def:classA}--\ref{def:classF} in \Cref{def:class} and $\left(\pro{n}{t}\right)_{t=0}^{N}$ be the $\cl$-constrained random graph process. Recall that $\acc{n}{m_0}$ denotes the graph in which exactly $m_0$ edges have actually been added. We assume that $m_0=m_0(n)\in\left[N\right]$ is such that $\acc{n}{m_0}$ always exists, i.e., for any ordering of the potential edges, at least $m_0$ of them have been accepted at the end of the process.

\proofofW{thm:largestComponent}
By \Cref{rem:component_structure} we have $\numberVertices{\largestcomponent{\pro{n}{t}}}=\numberVertices{\largestcomponent{\er{n}{t}}}$ and therefore we have that for $0\leq t_1\leq t_2$
\begin{align}\label{eq:6}
	\text{\whp}~~ \numberEdges{\pro{n}{t_1}}\leq m_0\leq \numberEdges{\pro{n}{t_2}}~~\implies~~ \text{\whp}~~ \numberVertices{\largestcomponent{\er{n}{t_1}}}\leq \numberVertices{\largestcomponent{\acc{n}{m_0}}}\leq \numberVertices{\largestcomponent{\er{n}{t_2}}}.
\end{align}

(i) First consider the case $m_0=cn/2$ for $1<c<2$. Let $\varepsilon>0$ be small, $t_1=\left(\invFunc{c}-\varepsilon\right)n/2$, and $t_2=\left(\invFunc{c}+\varepsilon\right)n/2$. By \Cref{thm:main} we have that \whp\ $\numberEdges{\pro{n}{t_1}}\leq m_0\leq \numberEdges{\pro{n}{t_2}}$. Thus, \eqref{eq:6} implies that \whp
\begin{align*}
\sol{\invFunc{c}-\varepsilon+\smallo{1}}n\leq \numberVertices{\largestcomponent{\acc{n}{m_0}}}\leq \sol{\invFunc{c}+\varepsilon+\smallo{1}}n,
\end{align*}
where we used \Cref{thm:erInt}\ref{thm:erIntA}. As $\beta$ is continuous, we obtain with $\varepsilon \downarrow 0$ that \whp\ $\numberVertices{\largestcomponent{\acc{n}{m_0}}}=\left(\sol{\invFunc{c}}+\smallo{1}\right)n$.

(ii) The four cases where $m_0\leq n/2+\smallo{n}$ can be shown similarly.

(iii) Next, we observe that $\numberVertices{\largestcomponent{\acc{n}{m_0}}}$ is non-decreasing in $m_0$, $\lim\limits_{c\uparrow 2}\invFunc{c}=\infty$, and $\lim\limits_{c\to\infty}\sol{c}=1$. Thus, in case of $m_0=n$ the statement follows by taking $c\uparrow 2$ in the previously shown fact that \whp\
\begin{align*}
\numberVertices{\largestcomponent{\acc{n}{m_0}}}=\left(\sol{\invFunc{c}}+\smallo{1}\right)n
\end{align*}
if $m_0=cn/2$ for $1<c<2$. 

(iv) Finally, we consider the case $m_0=cn/2$ for $2<c$ and let $t_1=n\cdot \log n$. By \eqref{eq:5} we know that \whp\ $\numberEdges{\pro{n}{t_1}}\leq m_0$. Thus, using \eqref{eq:6} yields 
\begin{align*}
\numberVertices{\largestcomponent{\acc{n}{m_0}}}\geq \numberVertices{\largestcomponent{\er{n}{t_1}}}=n,
\end{align*}
because \whp\ $\er{n}{t_1}$ is connected. This finishes the proof. \qed

\section*{Acknowledgement}

The authors thank the anonymous referees for many helpful remarks to
improve the presentation of this paper.

\bibliographystyle{plain}
\bibliography{kang-missethan-process}

\newpage
\appendix
\section{Properties of $f(c)$}\label{sec:propertiesF}
\begin{lem}\label{lem:function}
For $c>1$ let $\sol{c}$ be the unique positive solution of the equation $1-x=e^{-cx}$ and let $\func{c}=2\sol{c}+c\left(1-\sol{c}\right)^2$ (as in \Cref{def:func}). Then the following hold:
\begin{enumerate}
\item\label{lem:functionA}
$f'(c)=1-\sol{c}^2$;
\item\label{lem:functionB}
$f$ is strictly increasing;
\item\label{lem:functionC}
$f$ is continuous;
\item\label{lem:functionD}
$\lim\limits_{c\downarrow 1}f(c)=1$;
\item\label{lem:functionE}
$\lim\limits_{c\to \infty}f(c)=2$;
\item\label{lem:functionF}
$f:\left(1,\infty\right)\to \left(1,2\right)$ is bijective and therefore, has an inverse $f^{-1}:(1,2)\to(1,\infty)$.
\end{enumerate}
\end{lem}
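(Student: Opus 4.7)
The proof revolves around part (a); all the remaining items are short consequences. I would begin by differentiating the defining equation $1-\beta(c) = e^{-c\beta(c)}$ implicitly with respect to $c$ to obtain an expression for $\beta'(c)$. A short rearrangement gives
\[
\beta'(c) = \frac{\beta(c)\bigl(1-\beta(c)\bigr)}{1 - c\bigl(1-\beta(c)\bigr)},
\]
where the denominator is positive for $c>1$ because $c(1-\beta(c)) = c e^{-c\beta(c)} < 1$ (one can see this either from the standard Galton--Watson reasoning or simply by noting that $g(x)=1-x-e^{-cx}$ has derivative $-1+ce^{-cx}$, and at the unique positive root one must have $g'(\beta(c))<0$). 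Differentiating $f(c) = 2\beta(c)+c(1-\beta(c))^2$ and substituting the formula for $\beta'(c)$ produces a cancellation: the $\beta'$-terms collapse to $2\beta(1-\beta)$, and combining with the leftover $(1-\beta)^2$ gives $f'(c) = (1-\beta(c))(1+\beta(c)) = 1-\beta(c)^2$, which is exactly (a).

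For (b), since $0<\beta(c)<1$ for $c>1$, part (a) immediately yields $f'(c)>0$, so $f$ is strictly increasing. For (c), the implicit function theorem applied to $G(c,x)=1-x-e^{-cx}$ (whose partial in $x$ is $-1+ce^{-cx}$, nonzero at $x=\beta(c)$ as noted above) shows that $\beta$ is $C^1$ on $(1,\infty)$, and hence so is $f$.

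For the boundary limits in (d) and (e): as $c\downarrow 1$, continuity of solutions in $c$ forces $\beta(c)\to 0$ (the Galton--Watson interpretation, or direct analysis of $1-x=e^{-cx}$ near $c=1$), so $f(c)\to 0+1\cdot 1 = 1$. As $c\to\infty$, $\beta(c)\to 1$; using the defining equation, $c(1-\beta(c))^2 = c\,e^{-2c\beta(c)}\to 0$, and therefore $f(c)\to 2+0=2$. Finally, (f) follows by combining (b)--(e): $f$ is a continuous strictly increasing map on $(1,\infty)$ whose limits at the endpoints are $1$ and $2$, hence by the intermediate value theorem it is a bijection $(1,\infty)\to(1,2)$ and admits the inverse $f^{-1}$.

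The only step that requires any genuine care is the algebraic simplification in (a); the main (mild) obstacle is checking that the denominator $1-c(1-\beta(c))$ does not vanish, but this is handled by the standard observation above that $g'(\beta(c))<0$. Everything else is routine implicit function theorem and limit analysis.
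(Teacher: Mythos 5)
Your proof is correct and takes essentially the same route as the paper: implicit differentiation of $1-\beta(c)=e^{-c\beta(c)}$ to obtain $\beta'(c)=\beta(c)(1-\beta(c))/(1-c(1-\beta(c)))$, algebraic cancellation to get $f'(c)=1-\beta(c)^2$, and then routine monotonicity and limit arguments for (b)--(f). The one place you go slightly beyond the paper is in explicitly verifying that the denominator $1-c(1-\beta(c))=1-ce^{-c\beta(c)}$ is nonzero (via $g'(\beta(c))<0$), a point the paper leaves implicit.
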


\begin{proof}
By definition we have $1-\sol{c}=e^{-c\sol{c}}$. Taking the derivative of both sides yields
\begin{align*}
-\beta'(c)=\left(-\sol{c}-c\beta'(c)\right)e^{-c\sol{c}}=\left(-\sol{c}-c\beta'(c)\right)\left(1-\sol{c}\right).
\end{align*}
Rearranging this gives 
\begin{align*}
\beta'(c)=\frac{\sol{c}-\sol{c}^2}{1-c+c\sol{c}}.
\end{align*}
Using it we obtain
\begin{align*}
f'(c)&=2\beta'(c)+\left(1-\sol{c}\right)^2-2c\left(1-\sol{c}\right)\beta'(c)\\
&=
\frac{2\sol{c}-2\sol{c}^2+\left(1-\sol{c}\right)^2\left(1-c+c\sol{c}\right)-2c\left(1-\sol{c}\right)\left(\sol{c}-\sol{c}^2\right)}{1-c+c\sol{c}}
\\
&=1-\sol{c}^2,
\end{align*}
which shows \ref{lem:functionA}. As $\sol{c}\in \left(0,1\right)$ this implies also \ref{lem:functionB}. By definition $\beta$ is continuous and therefore so is $f$, proving \ref{lem:functionC}. We observe that $\lim\limits_{c\downarrow 1}\sol{c}=0$. Hence, we obtain that
\begin{align*}
\func{c}=2\sol{c}+c\left(1-\sol{c}\right)^2\to 2\cdot 0+1\cdot 1=1\quad \text{as} \quad c\downarrow 1,
\end{align*}
which shows \ref{lem:functionD}. 

For \ref{lem:functionE}, we note that $\lim\limits_{c\to \infty}\sol{c}=1$ and for $c$ large enough we have
\begin{align*}
ce^{-2\sol{c}c}\leq ce^{-c} \to 0 \quad \text{as} \quad c\to \infty.
\end{align*}
Thus, we obtain
\begin{align*}
\func{c}=2\sol{c}+ce^{-2\sol{c}c}\to 2\cdot 1+0=2 \quad \text{as}\quad c\to\infty,
\end{align*}
which shows \ref{lem:functionE}. 

Finally, \ref{lem:functionF} follows by combining \ref{lem:functionB}, \ref{lem:functionC}, \ref{lem:functionD}, and \ref{lem:functionE}.
\end{proof}
\begin{figure}
	\includegraphics[scale=0.4]{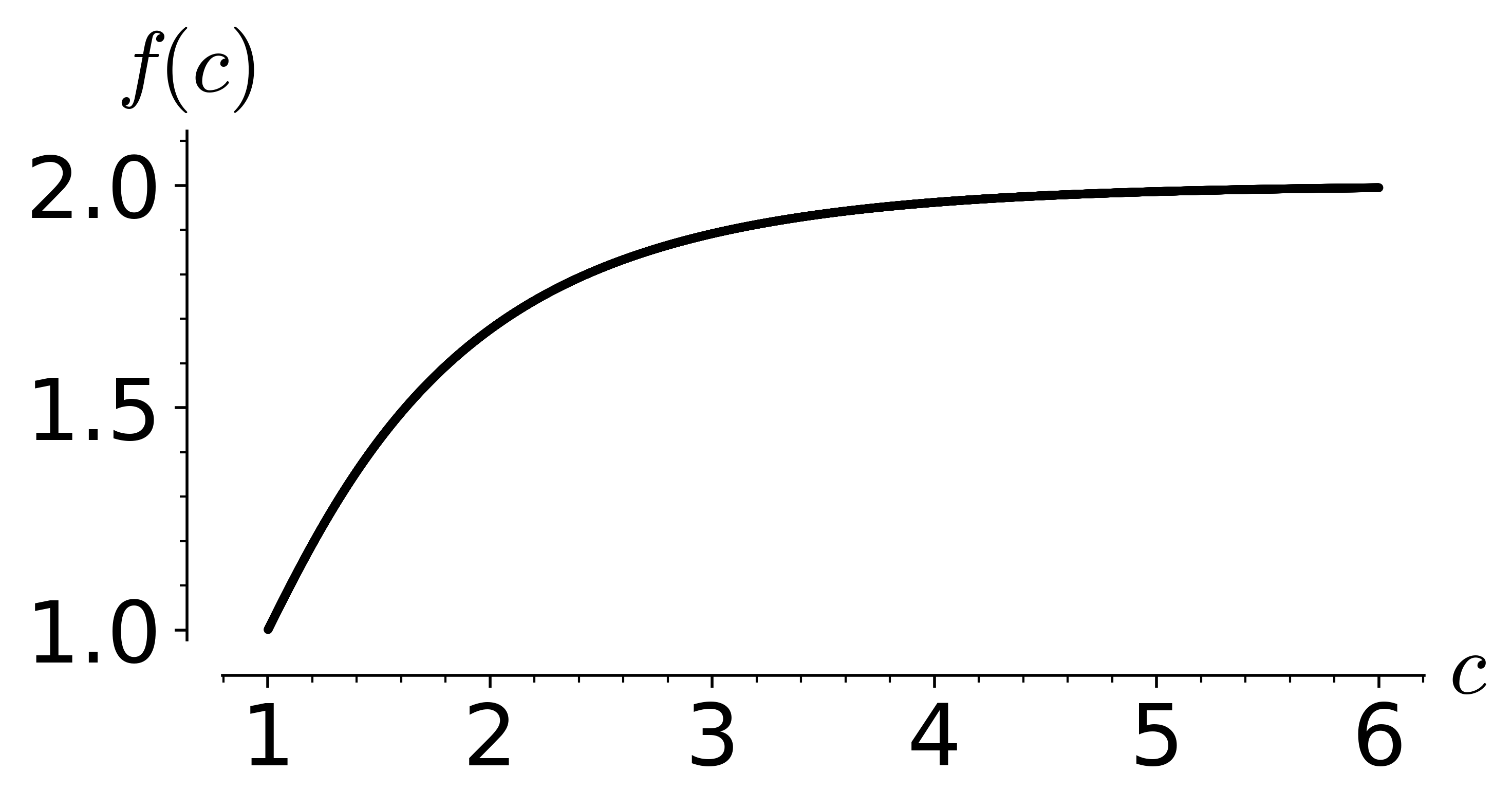}
	\caption{The function $f$ as defined in \Cref{def:func}.}
	\label{fig:function}
\end{figure}
\end{document}